\newtheorem{proposition}{Proposition}[section]
\newtheorem{theorem}[proposition]{Theorem}
\newtheorem{lemma}[proposition]{Lemma}
\newtheorem{corollary}[proposition]{Corollary}
\theoremstyle{definition}
\newtheorem{definition}[proposition]{Definition}
\theoremstyle{remark}
\newtheorem{remark}[proposition]{Remark}
\theoremstyle{definition}
\numberwithin{equation}{section}
\newcommand{\R}{\mathbb{R}}
\title[Emergence of quantum dynamics from chaos]{Emergence of quantum dynamics from chaos: \\ The case of prequantum cat maps}
\author[J.~Echevarría Cuesta]{Javier Echevarría Cuesta}
\address{Centre de Mathématiques Laurent Schwartz, École Polytechnique, 91128 Palaiseau Cedex, France}
\email{javier.echevarria-cuesta@polytechnique.edu}
\begin{document}

\begin{abstract}
Faure and Tsujii recently proposed a new quantization theory for symplectic Anosov diffeomorphisms. It combines prequantization with the study of the Pollicott--Ruelle resonances of an associated transfer operator. We apply this framework to the hyperbolic symplectic automorphisms of the $2n$-dimensional torus, the so-called cat maps. Our main result gives an explicit relation between the resonances of the prequantum transfer operator and the eigenvalues of the standard quantum cat maps, generalizing the case $n=1$ previously treated by Faure.
\end{abstract}
\maketitle

\section{Introduction}

Early in the inception of quantum mechanics, it became clear that the differences with classical mechanics were too drastic, both in the predictions and the mathematical formalism, for them not to warrant a deeper explanation. We now begin to understand, through Bohr's correspondence principle, how the classical world emerges as a limiting case of its quantum counterpart. Celebrated results in this direction include Egorov's theorem and the WKB approximation. In recent years, work by Faure and Tsujii has hinted at a potential relationship in the other direction: they have shown that quantum dynamics can emerge from the long-term behaviour of both  discrete \cite{ faure07, faure15} and continuous \cite{faure24} chaotic classical systems.

Their observations relate to the problem of quantization, that is, going from a classical system to its quantum counterpart. Mathematically, quantization is subtle because it involves choices, making it unclear which procedures are more canonical. Until recently, two general approaches existed: geometric quantization and the algebraic theory of deformation quantization. These are dual to each other, much like the Heisenberg and Schrödinger pictures in quantum mechanics.

It turns out that the first step of geometric quantization, known as prequantization, can be compelling in its own right. Roughly speaking, prequantum dynamics is the same as classical dynamics but with the introduction of complex phases. It should feel natural to introduce complex phases because these govern interference effects, a key characteristic of wave and quantum dynamics. The evolution of these phases over time is dictated by the classical trajectories. More precisely, prequantization is equivalent to picking a contact $\text{U}(1)$-extension of the classical dynamics.

The Hilbert space of wave functions resulting from prequantization is too large to be the final product of a proper quantization procedure. Heuristically, this is because there is no uncertainty principle. Geometric quantization solves this by choosing a complex structure on phase space, that is, a complex polarization, but this is an arbitrary choice and restricts the class of functions that can be quantized.

Faure and Tsujii's recent work shows that, for chaotic classical systems, there may be a better way to achieve a full quantization theory. The missing ingredient appears to be the long-term dynamics of the prequantum transfer operator. The method is worked out for symplectic Anosov diffeomorphisms in \cite{faure15} and contact Anosov flows in \cite{faure24}. In this paper, we apply it to hyperbolic symplectic automorphisms of the $2n$-dimensional torus, the so-called cat maps. In Theorem \ref{theorem:main-result}, we show that the Pollicott--Ruelle resonances of the prequantum transfer operator are related to the usual Weyl spectrum of the quantum evolution operators. We then show how quantum behaviour appears dynamically in the correlation functions.

This is akin to the results in \cite{faure07} for $n=1$, but with several differences. First, \cite{faure07} only considers cat maps that are the time-$1$ flow of a quadratic Hamiltonian on $\mathbb{R}^2$. This is not too restrictive for $n=1$ because $\text{Sp}(2, \mathbb{R})=\text{SL}(2, \mathbb{R})$, so each matrix can be written as $\pm e^X$ for some $X\in \mathfrak{sl}(2,\mathbb{R})$. In higher dimensions, the assumption is much stronger. Second, we clarify the statements on the uniqueness of the objects, the need for parity conditions on the matrix coefficients, and related points that were previously ignored or left unexplained. Finally, rather than using the complex line bundle terminology, we adopt the language of principal $\text{U}(1)$-bundles to harmonize the concepts and notation with the more recent work \cite{faure15}.

Quantum cat maps are toy models often used to study quantum chaos \cite{zelditch87, degli-esposti93, bouzouina96, faure03}. A natural question is whether the prequantum approach can shed light on the semiclassical behaviour of the eigenstates of the quantum dynamics, which is the central problem in the study of quantum cat maps. Accordingly, another objective of this paper is to provide a concrete realization of all prequantum cat maps. Part of this was done in the earlier work \cite{zelditch97} on Toeplitz quantization, but only a special subgroup was explicitly prequantized.

\subsection*{Acknowledgements} I would like to thank Frédéric Faure for introducing me to this topic and always being available for discussion. This work was supported by a Sophie Germain Scholarship from the Fondation Mathématique Jacques Hadamard. 

\section{Statement of the results}\label{section:statement-of-the-results}

Let $\mathbb{T}^{2n}\coloneqq\mathbb{R}^{2n}/\mathbb{Z}^{2n}$ be the $2n$-dimensional torus. We use coordinates $x=(q,p)$ to represent a point on either $\mathbb{T}^{2n}$ or its universal cover $\mathbb{R}^{2n}$. We endow $\mathbb{T}^{2n}$ with the standard symplectic $2$-form $\omega \coloneq\sum_{j=1}^n dq_j\wedge dp_j$.

A matrix $M\in \text{Sp}(2n, \mathbb{Z})$ descends to a symplectic automorphism of $\mathbb{T}^{2n}$. If $M$ is hyperbolic, that is, if it has no eigenvalues on the unit circle $\mathbb{S}^1$, the resulting automorphism is Anosov (uniformly hyperbolic). Following the physics literature, we call it a \emph{classical cat map}. These maps are interesting because they exhibit strong chaotic properties such as ergodicity and mixing. They serve as discrete proxies for the geodesic flow on a negatively curved Riemannian manifold. We are interested in studying their prequantization.
\begin{definition}\label{def:desiderata-prequantum}
A \emph{prequantum bundle} $(P, \pi, \alpha)$ over the closed symplectic manifold $(\mathbb{T}^{2n}, \omega)$ is a smooth principal $\text{U}(1)$-bundle $\pi : P\to \mathbb{T}^{2n}$ equipped with a principal $\text{U}(1)$-connection $\alpha\in \mathcal{C}^\infty(P,T^*P\otimes i\mathbb{R})$ such that the curvature $2$-form $\Omega=d\alpha$ satisfies
\begin{equation}\label{eq:curvature-original}
\Omega=2\pi i \cdot \pi^*\omega.
\end{equation}
Given a hyperbolic matrix  $M\in \textup{Sp}(2n,\mathbb{Z})$, a $\text{U}(1)$-equivariant lift $\widetilde{M}:P\to P$ preserving $\alpha$ is called a \emph{prequantum cat map}.
\end{definition}
We will verify that such objects exist and discuss to what degree they are unique.
\begin{remark}
We earlier characterized prequantization as a contact $\text{U}(1)$-extension of  the Hamiltonian dynamics. Indeed, if we define the $1$-form $\alpha'\coloneq\frac{1}{2\pi i} \alpha$, then
\begin{equation*}
dV_{P}\coloneq\frac{1}{n!}\alpha'\wedge (d\alpha')^{\wedge n}=\alpha'\wedge \pi^*dV_{\mathbb{T}^{2n}}
\end{equation*}
is a non-degenerate volume form on $P$, making $\alpha'$ a contact form preserved by $\widetilde{M}$.
\end{remark}

Prequantum dynamics are best described by a transfer operator.
\begin{definition}
The \emph{prequantum transfer operator} $F: \mathcal{C}^\infty(P)\to \mathcal{C}^\infty(P)$ associated to a prequantum cat map $\widetilde{M}:P\to P$ is defined as
\begin{equation*}
Fu\coloneq u\circ \widetilde{M}^{-1}, \quad u\in \mathcal{C}^\infty(P).
\end{equation*}
\end{definition}
This type of operator is pervasive in the study of dynamical systems because, once extended by duality to the space of distributions (generalized functions) $\mathcal{D}'(P)$, it corresponds to the pushforward evolution of probability distributions by the dynamics. To avoid dealing with chaotic individual trajectories, it is common to consider the behaviour of \emph{correlation functions}
\begin{equation*}
C_{u,v}(t)\coloneq\int_{P}  ( u\circ \widetilde{M}^{-t})\cdot \overline{v} \, dV_P=\langle \widehat{M}^t u, v\rangle_{L^2(P)},\quad u,v \in \mathcal{C}^\infty(P),
\end{equation*}
as $t\to \infty$. They measure the loss of memory or asymptotic independence.



One nice consequence of the $\text{U}(1)$-equivariance of $\widetilde{M}: P\to P$ is that the transfer operator $F$ commutes with the action of $\text{U}(1)$ on $\mathcal{C}^\infty(P)$. As a result, it respects a natural decomposition into Fourier modes with respect to the action of $\text{U}(1)$.
\begin{definition}
For $N\in \mathbb{Z}$, the subspace of functions in the \emph{$N$th Fourier mode} is
\begin{equation*}
\mathcal{C}^\infty_N(P)\coloneq\{ u\in \mathcal{C}^\infty(P)\mid u(e^{2\pi is} p)=e^{2\pi iNs} u(p)\text{ for all }p\in P, \, s\in \mathbb{R}\}.
\end{equation*}
We denote the restriction of $F$ to $\mathcal{C}^\infty_N(P)$ by $F_N$.
\end{definition}
To understand the long-time behaviour of the correlation functions, it suffices to study the correlation functions of each transfer operator $F_N$. Indeed, we have
\begin{equation*}
C_{u,v}(t)=\langle F^t u, v \rangle_{L^2(P)}=\sum_{N\in \mathbb{Z}}\langle F^t_N u_N, v_N \rangle_{L^2(P)},
\end{equation*}
where $u_N, v_N\in \mathcal{C}^\infty_N(P)$ are the $N$th Fourier components of $u,v\in \mathcal{C}^\infty(P)$.

\begin{remark}
$(i)$ Complex conjugation commutes with $F$ and maps $\mathcal{C}^\infty_N(P)$ to $\mathcal{C}^\infty_{-N}(P)$. It is thus enough to study the case $N\geq 0$. 

\noindent $(ii)$ When $N=0$, we are effectively dealing with smooth functions on the torus and classical cat maps. Using Fourier series, we can show (see \cite[p. 946]{brini01}) that we have super-exponential decay of correlations, that is, for any $\rho\in (0,1)$,
\begin{equation*}
C_{u, v}(t)=\int_{P}u\,dV_P  \int_{P}\overline{v}\,dV_P+\mathcal{O}_{u,v}(\rho^t).
\end{equation*}
We therefore restrict our attention to $N\geq 1$.


\noindent $(iii)$ The subspace $\mathcal{C}_N^\infty(P)$ can be identified with the space of smooth sections of an associated Hermitian complex line bundle $L^{\otimes N}$ over $\mathbb{T}^{2n}$ (that is, the $N$th tensor power of a line bundle $L\to \mathbb{T}^{2n}$) equipped with a covariant derivative. This is the point of view adopted in \cite{faure07} and most references on geometric quantization.


\end{remark}

Following \cite{ruelle86}, we take the \emph{power spectrum}, which for us is the Fourier transform $\widehat{C}_{u,v}$ of $C_{u,v}$ restricted to $t> 0$, with $u, v\in\mathcal{C}^\infty_N(P)$. By \cite{faure15}, $\widehat{C}_{u,v}$ is analytic in  $\{\lambda\in \mathbb{C}\mid \text{Im}(\lambda) < 0\}$ and admits a meromorphic extension to $\mathbb{C}$ with finite-rank poles, whose location and rank are independent of $u$ and $v$. The \emph{Pollicott--Ruelle resonances} are the complex numbers $e^{i\lambda}$ such that $\lambda\in \mathbb{C}$ is a pole of this meromorphic extension. They govern the long-time behaviour of $C_{u,v}(t)$.

The microlocal method of Faure--Tsujii in \cite{faure15} alternatively defines Pollicott--Ruelle resonances for each operator $F_N$ (extended by duality to $\mathcal{D}'_N(P)$, the dual space of $\mathcal{C}_N^\infty(P)$) as the points $\lambda \in \mathbb{C}$ for which the bounded operator
\begin{equation*}
\lambda I -F_N: H^r_N(P)\to H^r_N(P), \quad c^{r}<|\lambda|,
\end{equation*}
is not invertible. Here, each Hilbert space $H^r_N(P)$ is an \emph{anisotropic Sobolev space}
$$\mathcal{C}^\infty_N(P)\subset H^r_N(P)\subset \mathcal{D}'_N(P),$$ and $c\in (0,1)$ is a fixed constant independent of $r>0$ (see \cite[Theorem 1.3.1]{faure15}). Resonances and their associated eigenspaces are shown to be independent of the parameter $r$. The relation with our definition is given by the formula
\begin{equation*}
\widehat{C}_{u,v}(\lambda)\coloneq\sum_{t=1}^\infty e^{-i\lambda t} C_{u,v}(t)=\sum_{t=1}^\infty e^{-i\lambda t}\langle F^t_N u, v\rangle_{L^2(P)}=\langle (e^{i\lambda} I -F_N)^{-1}u, v\rangle_{L^2(P)},
\end{equation*}
valid for $\text{Im}(\lambda)<0$ and $u,v\in \mathcal{C}^\infty_N(P)$.

The first result in this paper is a description of these resonances. It is rare to find systems for which resonances can be explicitly computed. Moreover, Theorem \ref{theorem:main-result} exposes a link between the resonances of the transfer operators $F_N$ and the spectrum $\sigma(U_{N, \theta})$ of the usual quantum cat maps $U_{N, \theta}$ obtained through Weyl quantization (parametrized by $N\in \mathbb{N}^*$ and $\theta\in \mathbb{T}^{2n}$) and acting on the $N^n$-dimensional Hilbert spaces $\mathcal{H}_{N, \theta}$ (see Section \ref{subsection:quantum-cat-maps} for their definition).

To cleanly state the relationship, we impose the condition $\varphi_M=0$, where $\varphi_M$ is an element of $\{0,1\}^{2n}$ uniquely defined for each cat map $M$ via Lemma \ref{lemma:dyat}.








\begin{theorem}\label{theorem:main-result}
Let $M\in \textup{Sp}(2n,\mathbb{Z})$ be hyperbolic. We can find $E\in \textup{GL}(n,\mathbb{R})$ satisfying $\Vert E^{-1}\Vert <1$ and $|\det E|>1$ such that  $M$ is symplectically conjugate to
\begin{equation*}
\left(\begin{matrix}
E & 0\\
0 & (E^T)^{-1}\\
\end{matrix}\right).
\end{equation*}
Suppose that $\varphi_M=0$. Then, for any prequantum transfer operator $F$, the Pollicott--Ruelle resonances of $F_N$ with $N\in \mathbb{N}^*$ are, up to a global phase, given by
\begin{equation*}
|\det E|^{-1/2} \cdot \{\lambda_{j}\}_{j\in \mathbb{N}} \cdot \sigma(U_{N,0}),
\end{equation*}
where $\{\lambda_j\}_{j\in \mathbb{N}}$ are the eigenvalues of the operator $u \mapsto u\circ E^{-1}$ acting on $\mathcal{C}^\infty(\mathbb{R}^n)$.
\end{theorem}

\begin{remark}
$(i)$  The eigenvalues of $u\mapsto u\circ E^{-1}$ are contained in the annuli
\begin{equation}\label{eq:band-structure-2}
\left\{z\in \mathbb{C}\mid  \Vert E\Vert^{-k}\leq |z|\leq  \Vert E^{-1}\Vert^k\right\}
\end{equation}
indexed by $k\in \mathbb{N}$ and corresponding to the restriction to homogeneous polynomials on $\mathbb{R}^n$ of degree $k$. While these annuli can intersect each other, the outermost one is always isolated because  $\Vert E^{-1}\Vert < 1$.

\noindent $(ii)$ We have $\sigma(U_{N, \theta})\subset \mathbb{S}^1$ and  $|\sigma(U_{N, \theta})|=N^n$ counting multiplicities. Out of all the operators $U_{N, \theta}$, only $U_{N,0}$ appears here because of the condition $\varphi_M=0$.

\noindent $(iii)$ The case $n=1$ is treated in \cite{faure07}. However, as discussed above, the paper deals with a restricted class of cat maps. Moreover, instead of assuming $\varphi_M=0$, it restricts the result to even $N$.



\end{remark}

\begin{figure}[htp]
\centering
\begin{subfigure}{0.49\textwidth}
\centering
\includegraphics[width=\textwidth]{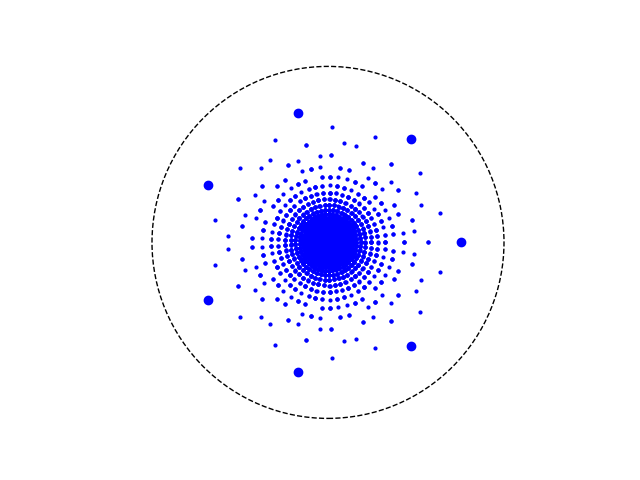}
\caption{prequantum resonances}
\label{figure:spectra-a}
\end{subfigure}
\hfill
\begin{subfigure}{0.49\textwidth}
\centering
\includegraphics[width=\textwidth]{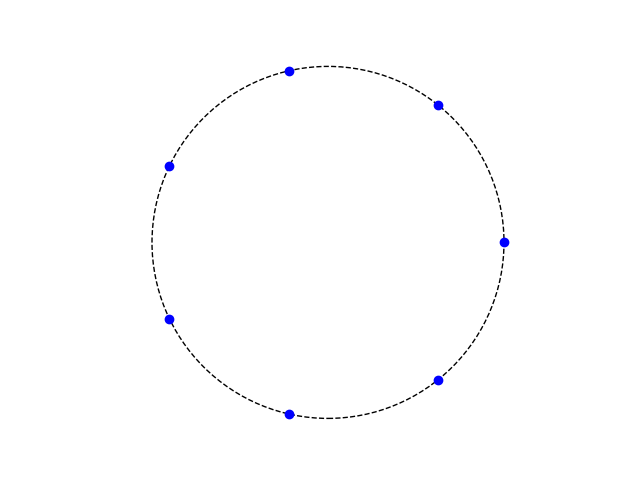}
\caption{quantum eigenvalues}
\label{figure:spectra-b}
\end{subfigure}
\caption{Spectra for a cat map $M\in \mathrm{Sp}(6,\mathbb{Z})$, with $N=2$.}
\end{figure}

The main motivation for studying resonances is that correlation functions can be expressed as  asymptotic expansions over them, up to exponentially small error (see \cite[Theorem 1.6.3]{faure15}). Theorem \ref{theorem:second-result} makes precise what we mean by the emergence of quantum dynamics: it shows that $C_{u, v}$ eventually behaves like quantum correlation functions, that is, quantum elements of the quantum propagators.

The argument relies on the fact that, as $t\to \infty$, the external prequantum resonances on the circle of radius $|\det E|^{-1/2}$ dominate the dynamics. See Figure \ref{figure:spectra-a}. This is why the quantization scheme of Faure--Tsujii \cite[Definition 1.3.6]{faure15} consists in spectrally projecting the transfer operator onto the outermost annulus of the Pollicott--Ruelle spectrum. We do this explicitly. As we will see in Corollary \ref{corollary:observables-preserved}, there are explicit isomorphisms
$$W_{N, \theta}:\mathcal{H}_{N, \theta}\otimes \mathcal{S}(\mathbb{R}^n) \to \mathcal{C}^\infty_N(P).$$
To obtain quantization maps $\mathcal{C}_N^\infty(P)\to \mathcal{H}_{N, \theta}$, mapping observables to quantum states,  we compose $W_{N,\theta}^{-1}$ with a projector $\mathcal{H}_{N, \theta}\otimes \mathcal{S}(\mathbb{R}^n)\to \mathcal{H}_{N, \theta}$ such as
\begin{equation}\label{eq:projectors}
(I\otimes \delta_0)(\nu \otimes f)= \nu f(0)\quad \text{ or } \quad (I\otimes \, dx)(\nu \otimes f) = \nu \int_{\mathbb{R}^n}f(x)\, dx.
\end{equation}

\begin{theorem}\label{theorem:second-result} Suppose that $\varphi_M=0$, pick $E\in \textup{GL}(n,\mathbb{R})$ as in Theorem \ref{theorem:main-result} and fix $N\in \mathbb{N}^*$. For any $u, v\in \mathcal{C}^\infty_N(P)$, define $\hat{u},\hat{v}\in\mathcal{H}_{N, 0}$ by
$$\hat{u}\coloneq(I\otimes \delta_0)(W_{N,0}^{-1}u)\quad \text{ and } \quad \hat{v}\coloneq(I\otimes dx)(W_{N,0}^{-1}v).$$
Then, for any $\rho\in (\Vert E^{-1}\Vert, 1)$, we have
\begin{equation*}
C_{u,v}(t)=\langle U_{N, 0}^t \hat{u}, \hat{v}\rangle_{\mathcal{H}_{N,0}}|\det E|^{-t/2}\left(1+\mathcal{O}_{u,v}(\rho^t)\right)\quad \text{as } t\to \infty.
\end{equation*}
\end{theorem}






\section{Prequantization of classical cat maps}


In this section, we construct the objects introduced in Definition \ref{def:desiderata-prequantum} and discuss their varying degrees of uniqueness.

\subsection{Construction of the prequantum bundle}

A key ingredient in the construction of the prequantum bundle is the reduced $(2n+1)$-dimensional Heisenberg group $\mathbb{H}^{\text{red}}_n$. As a reference, the Heisenberg group and its representations are thoroughly introduced in \cite[Chapter 1]{folland89}.   We recall that the group $\mathbb{H}^{\text{red}}_n$ consists of the set $\mathbb{R}^{2n}\times \mathbb{S}^1$ endowed with the group law
\begin{equation*}
\left(x,e^{2\pi i s}\right)\cdot (x', e^{2\pi i s'})\coloneq\left(x+x',e^{2\pi i\left(s+s'+\frac{1}{2}\sigma(x,x')\right)}\right),
\end{equation*}
where $\sigma:\mathbb{R}^{2n}\times \mathbb{R}^{2n}\to \mathbb{R}$ is the usual symplectic bilinear form on $\mathbb{R}^{2n}$ defined by
\begin{equation*}
\sigma(x, x')\coloneq \langle p, q'\rangle-\langle p', q\rangle.
\end{equation*}
This group first makes an appearance because it provides the canvas on which to build a non-trivial principal $\text{U}(1)$-bundle over $\mathbb{T}^{2n}$. Concretely, define the lattice $\Gamma\subset \mathbb{H}^{\text{red}}_n$ as the image of the group homomorphism $\mathbb{Z}^{2n}\to  \mathbb{H}^{\text{red}}_n$ given by
\begin{equation*}
w\mapsto (w, e^{\pi i Q(w)}),
\end{equation*}
where the quadratic form $Q$ on $\mathbb{R}^{2n}$ is
\begin{equation}\label{eq:quadratic-form}
Q(x)\coloneq \langle q, p\rangle.
\end{equation}
Then, we obtain a principal $\text{U}(1)$-bundle by considering $\pi :\Gamma\setminus\mathbb{H}^{\text{red}}_{n} \to \mathbb{T}^{2n}$. The map $\pi$ is nothing but the projection onto the first factor. The center of $\mathbb{H}^{\text{red}}_{n}$ is the $\mathbb{S}^1$ factor, and its $\text{U}(1)$-action generates the fibres of the principal $\text{U}(1)$-bundle.

In the following statement, two prequantum bundles $(P, \pi, \alpha)$ and $(P', \pi', \alpha')$ over $(\mathbb{T}^{2n},\omega)$ are considered to be equivalent if there exists a $\text{U}(1)$-equivariant bundle isomorphism $\varphi:P\to P'$ such that $\varphi^*\alpha'=\alpha$.

\begin{proposition}\label{proposition:classification}
Any prequantum bundle over $(\mathbb{T}^{2n},\omega)$ is equivalent to exactly one of $(\Gamma\setminus \mathbb{H}^{\text{red}}_{n}, \pi, \alpha_\kappa)$, $\kappa\in [0,1)^{2n}$, where $\alpha_\kappa$ is the principal $\textup{U}(1)$-connection given by
\begin{equation*}
\alpha_\kappa\coloneq 2\pi i\left( ds+\frac{1}{2} \sum_{j=1}^n(q_j\, dp_j-p_j\, dq_j)+\sum_{j=1}^n (\kappa_{j}\, dp_j-\kappa_{j+n}\, dq_j)\right).
\end{equation*}

\end{proposition}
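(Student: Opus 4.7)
The plan is to split the statement into three subclaims: (a) each $(\Gamma\setminus\mathbb{H}^{\text{red}}_n,\pi,\alpha_\kappa)$ is a genuine prequantum bundle; (b) every prequantum bundle is equivalent to one of them; and (c) distinct $\kappa\in[0,1)^{2n}$ give inequivalent bundles.

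For (a), I would first verify $\Gamma$-invariance of $\alpha_\kappa$, so that it descends to the quotient. Under left translation by $\gamma=(w,e^{\pi i Q(w)})\in\Gamma$, the Heisenberg product shifts $s$ by $\tfrac{1}{2}\sigma(w,x)$ plus a constant, whence $L_\gamma^\ast ds=ds+\tfrac{1}{2}\sigma(w,dx)$. This cross term is exactly cancelled by the change that the pullback induces on $\tfrac{1}{2}\sum_j(q_j\,dp_j-p_j\,dq_j)$, while the constant piece $\sum_j(\kappa_j\,dp_j-\kappa_{j+n}\,dq_j)$ is translation-invariant. Differentiating then gives $d\alpha_\kappa=2\pi i\sum_j dq_j\wedge dp_j=2\pi i\,\pi^\ast\omega$, matching the curvature condition.

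For (b), the curvature $\Omega=2\pi i\,\pi^\ast\omega$ identifies, via Chern--Weil, the first Chern class of the underlying bundle with $[\omega]\in H^2(\mathbb{T}^{2n},\mathbb{Z})$; since principal $\text{U}(1)$-bundles over $\mathbb{T}^{2n}$ are classified by their Chern class, any prequantum bundle $(P,\pi,\alpha)$ admits a $\text{U}(1)$-equivariant bundle isomorphism $\varphi:P\to\Gamma\setminus\mathbb{H}^{\text{red}}_n$. Then $\varphi_\ast\alpha$ and $\alpha_0$ have the same curvature on the same bundle, so $\varphi_\ast\alpha-\alpha_0=2\pi i\,\pi^\ast\beta$ for some closed $\beta\in\Omega^1(\mathbb{T}^{2n})$. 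Hodge decomposition writes $\beta=dh+\sum_j(\kappa_j\,dp_j-\kappa_{j+n}\,dq_j)$ for some $\kappa\in\mathbb{R}^{2n}$ and $h\in\mathcal{C}^\infty(\mathbb{T}^{2n})$; an additional gauge transformation by $e^{-2\pi i h}\cdot e^{2\pi i\langle m,x\rangle}$, with $m\in\mathbb{Z}^{2n}$ chosen so that the remaining $\kappa$ lies in $[0,1)^{2n}$, puts $\varphi_\ast\alpha$ into the form $\alpha_\kappa$.

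For (c), assume $\alpha_\kappa$ and $\alpha_{\kappa'}$ are equivalent via some $\psi$. Being $\text{U}(1)$-equivariant over the identity of $\mathbb{T}^{2n}$, the map $\psi$ acts fiberwise by multiplication by a smooth $g:\mathbb{T}^{2n}\to\text{U}(1)$, and every such $g$ lifts to $\exp(2\pi i(h+\langle m,x\rangle))$ with $h\in\mathcal{C}^\infty(\mathbb{T}^{2n})$ and $m\in\mathbb{Z}^{2n}$. Working out $\psi^\ast\alpha_{\kappa'}=\alpha_\kappa$ yields an identity of closed 1-forms on $\mathbb{T}^{2n}$ whose harmonic part forces $\kappa'-\kappa\in\mathbb{Z}^{2n}$; combined with $\kappa,\kappa'\in[0,1)^{2n}$, this gives $\kappa=\kappa'$. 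The main obstacle is the lattice pullback calculation in (a): it is not deep, but one must carefully track how the Heisenberg cocycle $\tfrac{1}{2}\sigma(w,x)$ contributes to $L_\gamma^\ast ds$ and precisely cancels the non-invariance of $\tfrac{1}{2}(q\,dp-p\,dq)$. Everything else reduces to standard Chern--Weil theory, Hodge decomposition on $\mathbb{T}^{2n}$, and the explicit description of smooth maps $\mathbb{T}^{2n}\to\text{U}(1)$.
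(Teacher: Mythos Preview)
Your proposal is correct and follows essentially the same approach as the paper's proof: verify that each $\alpha_\kappa$ descends to the quotient and has the right curvature, use the Chern class to reduce to a fixed bundle, then apply Hodge decomposition together with integer-winding gauges to normalize the connection and establish uniqueness. The only minor omission is that you do not explicitly check the connection axioms $\alpha_\kappa(\partial/\partial s)=2\pi i$ and $\text{U}(1)$-invariance, which the paper mentions in passing; these are immediate from the formula.
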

\begin{proof}
Each $\alpha_\kappa$ is well defined on the quotient $\Gamma\setminus \mathbb{H}^{\text{red}}_{n}$ because it is left-invariant under the action of $\Gamma$. This becomes even easier to check if we more succinctly write
\begin{equation*}
\alpha_\kappa =2\pi i \left(ds+\frac{1}{2}\sigma( dx, x)+\sigma(dx, \kappa)\right).
\end{equation*}
Note that the normalization $\alpha_\kappa(\frac{\partial}{\partial s})=2\pi i$ is respected, and the $\text{U}(1)$-action leaves  $\alpha_\kappa$ invariant, so $\alpha_\kappa$ is indeed a principal $\text{U}(1)$-connection. The condition
\begin{equation*}
d\alpha_\kappa=2\pi i \cdot \pi^*\omega
\end{equation*}
is also satisfied, as desired. This confirms that we are dealing with an explicit family of prequantum bundles over $(\mathbb{T}^{2n}, \omega)$ parametrized by $\kappa\in [0, 1)^{2n}$.

All other choices of prequantum bundles are equivalent to $( \Gamma\setminus \mathbb{H}^{\text{red}}_{n}, \pi)$ up to bundle isomorphism because the first Chern class is always $[\omega]\in H^2(\mathbb{T}^{2n};\mathbb{Z})$. We can hence fix the principal $\text{U}(1)$-bundle to be $(\Gamma\setminus \mathbb{H}^{\text{red}}_{n}, \pi)$ and consider principal $\text{U}(1)$-connections up to gauge transformation.

The gauge transformations of $(\Gamma\setminus \mathbb{H}^{\text{red}}_{n}, \pi)$ are in one-to-one correspondence with $\mathcal{C}^\infty(\mathbb{T}^{2n}, \text{U}(1))$. Given $f$ in the latter space, the corresponding gauge transformation is $g_f(x, e^{2\pi i s})\coloneq (x,  f(x)e^{2\pi i s})$. It is well defined on the quotient it commutes with the action of $\Gamma$. For a principal $\text{U}(1)$-connection $\alpha$, we have $g_f^*\alpha= \alpha+\pi^*(f^{-1}\, df)$.

Let $\alpha$ be a principal $\text{U}(1)$-connection on $\Gamma\setminus \mathbb{H}^{\text{red}}_{n}$. The difference $\alpha-\alpha_0$ is invariant by the action of $\text{U}(1)$, so we may write $\alpha-\alpha_0 = 2\pi i \cdot \pi^*\beta$ for some $1$-form $\beta$ on $\mathbb{T}^{2n}$. If $\alpha$ satisfies the curvature condition \eqref{eq:curvature-original}, then $\beta$ must be closed. As a result, using the Hodge decomposition, we can write $\beta = dr+ h$, where $r\in \mathcal{C}^\infty(\mathbb{T}^{2n}, \mathbb{R})$ and $h$ is a unique harmonic $1$-form on $\mathbb{T}^{2n}$. Now using the gauge corresponding to $e^{-2\pi i r}$, we can get rid of $dr$ and assume that $\beta=h$. Harmonic forms on $\mathbb{T}^{2n}$ have the form $h = \langle y , dx\rangle$ with $y\in \mathbb{R}^{2n}$. There is a unique $w\in \mathbb{Z}^{2n}$ such that $y-w\in [0,1)^{2n}$, so we can use the gauge corresponding to $x\mapsto e^{-2\pi i \langle x, w\rangle}$ to bring $h$ to another harmonic $1$-form with coefficients in $[0,1)^{2n}$. By combining two gauges, we can thus find a gauge $g$ such that $g^*\alpha=\alpha_{\kappa}$ for some $\kappa\in [0,1)^{2n}$. This completes the proof since tracing through the uniqueness statements shows that $\alpha_\kappa$ is equivalent to $\alpha_{\kappa'}$ up to gauge transformation if and only if $\kappa=\kappa'$.
\end{proof}

\subsection{Construction of the prequantum cat maps}

It will be convenient to express a given matrix $M\in \text{Sp}(2n, \mathbb{Z})$ in block form
\begin{equation*}
M=\left(\begin{matrix}
A & B \\
C & D\\
\end{matrix}\right).
\end{equation*}
We recall that $M\in \text{Sp}(2n, \mathbb{Z})$ if and only if $A^T D-C^T B=I$, $A^T C=C^T A$ and $B^T D=D^T B$. We will need the following lemma.

\begin{lemma}\label{lemma:dyat} For each $M\in \textup{Sp}(2n, \mathbb{Z})$, there is a unique $\varphi_M\in \{0,1\}^{2n}$ such that
\begin{equation}\label{eq:dyat}
Q(M^{-1}w)-Q(w)=\sigma(\varphi_M, w) \mod 2\mathbb{Z} \quad \text{ for all } w\in\mathbb{Z}^{2n},
\end{equation}
with the quadratic form $Q$ defined in equation \eqref{eq:quadratic-form}.
\end{lemma}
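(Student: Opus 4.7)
The plan is to show that the map $F:\Z^{2n}\to\Z$ defined by $F(w):=Q(M^{-1}w)-Q(w)$ descends, modulo $2\Z$, to a group homomorphism $\overline{F}:\Z^{2n}\to \Z/2\Z$, and then to invoke the nondegeneracy of $\sigma$ modulo $2$ to read off a unique $\varphi_M\in \{0,1\}^{2n}$ with $\overline{F}=\sigma(\varphi_M,\cdot)\bmod 2$. First I would note that $F$ is well-defined and $\Z$-valued: since $M\in\mathrm{Sp}(2n,\Z)$ has integer inverse, $M^{-1}w\in\Z^{2n}$, and $Q(q,p)=\langle q,p\rangle$ is manifestly $\Z$-valued on integer vectors.

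The main computation is to control $F(w+w')-F(w)-F(w')$ modulo $2$. Polarizing $Q$ gives the symmetric bilinear form
\[
B(x,y) := Q(x+y)-Q(x)-Q(y) = \langle q,p'\rangle+\langle q',p\rangle,
\]
and a direct check gives $B(x,y)-\sigma(x,y)=2\langle q',p\rangle$, so $B\equiv \sigma\pmod{2\Z}$ on $\Z^{2n}\times\Z^{2n}$. Since $M^{-1}$ preserves $\sigma$, this yields
\[
F(w+w')-F(w)-F(w') \;=\; B(M^{-1}w,M^{-1}w')-B(w,w') \;\equiv\; \sigma(M^{-1}w,M^{-1}w')-\sigma(w,w') \;=\; 0 \pmod{2\Z}.
\]
Hence $\overline{F}$ factors through $(\Z/2\Z)^{2n}$ as a $\Z/2\Z$-linear form.

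To finish, I would observe that the standard symplectic matrix is invertible over $\Z/2\Z$ (its determinant is $\pm 1$), so the reduction of $\sigma$ is a nondegenerate pairing on $(\Z/2\Z)^{2n}$, and consequently $\varphi\mapsto \sigma(\varphi,\cdot)$ is a group isomorphism from $(\Z/2\Z)^{2n}$ onto its dual. Pulling $\overline{F}$ back through this isomorphism and choosing the $\{0,1\}$-representatives of its preimage produces $\varphi_M$; uniqueness is automatic from the isomorphism. There is no real obstacle here — the only substantive step is the parity identity $B\equiv\sigma\pmod{2\Z}$, which is a one-line polarization computation; the rest is linear-algebraic bookkeeping.
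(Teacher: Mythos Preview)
Your proof is correct and follows essentially the same route as the paper: both arguments reduce to checking that $w\mapsto Q(M^{-1}w)-Q(w)$ is a group homomorphism into $\Z/2\Z$ via the parity identity $Q(w+w')\equiv Q(w)+Q(w')+\sigma(w,w')\pmod{2\Z}$ together with the symplectic invariance of $\sigma$ under $M^{-1}$. You are just a bit more explicit than the paper in spelling out the nondegeneracy of $\sigma$ over $\Z/2\Z$ to extract and uniquely determine $\varphi_M$.
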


\begin{proof}
Let us define a map $\mathbb{Z}^{2n}\to \mathbb{Z}/2\mathbb{Z}$ by
\begin{equation*}
w\mapsto Q(M^{-1}w)-Q(w) \mod 2\mathbb{Z}.
\end{equation*}
It is easy to check that this is a group homomorphism by using the relation
\begin{equation*}
Q(w+w')=Q(w)+Q(w')+\sigma(w,w') \mod 2\mathbb{Z} \quad\text{ for all } w,w'\in\mathbb{Z}^{2n},
\end{equation*}
and the fact that $M\in \text{Sp}(2n,\mathbb{Z})$. The existence and uniqueness of $\varphi_M$ follow.
\end{proof}

\begin{remark}\label{remark:varphi-m}
\noindent $i)$
We note that the map $M\to \varphi_M$ satisfies
\begin{equation*}
\varphi_{M^{-1}}=M^{-1}\varphi_M\mod (2\mathbb{Z})^{2n} \quad \text{ and }\quad \varphi_{MM'}=\varphi_M+M\varphi_{M'}\mod (2\mathbb{Z})^{2n}.
\end{equation*}

\noindent $ii)$ We also notice that
\begin{equation*}
Q(M^{-1}w)-Q(w)=\langle CD^T m, m\rangle+ \langle AB^T n, n\rangle \mod 2\mathbb{Z} \quad \text{for all } w=(m, n)\in \mathbb{Z}^{2n}.
\end{equation*}
Therefore, if the integer matrices $CD^T $ and $ AB^T$ have even  entries, we get $\varphi_M=0$.

\noindent $iii)$ In the case $n=1$, we may explicitly write
\begin{equation*}
\varphi_M=(CD, AB)\mod (2\mathbb{Z})^2.
\end{equation*}
Motivated by this, we say that cat maps with $\varphi_M=0$ are in chequerboard form.
\end{remark}

We proceed to give a concrete realization of the prequantum cat maps.

\begin{proposition}\label{prop:explicit-construction-map}
Let $M\in \textup{Sp}(2n, \mathbb{Z})$ be hyperbolic and define $\tilde{\kappa}\in \mathbb{R}^{2n}$ by
\begin{equation}\label{eq:prequantum-condition}
\tilde{\kappa} \coloneq \frac{1}{2}(I-M)^{-1}\varphi_{M}.
\end{equation}
The map $\widetilde{M}: \mathbb{H}^{\textup{red}}_{n}\to  \mathbb{H}^{\textup{red}}_{n}$ given by
\begin{equation*}
\widetilde{M}(x, e^{2\pi is})\coloneq\left(Mx, e^{2\pi i\left(s+\frac{1}{2}\sigma(\varphi_{M},Mx)\right)}\right)
\end{equation*}
descends to a map on the quotient $\Gamma\setminus \mathbb{H}^{\textup{red}}_{n}$ which is the unique (up to a global phase) $\textup{U}(1)$-equivariant lift of $M:\mathbb{T}^{2n}\to \mathbb{T}^{2n}$ preserving the principal $\textup{U}(1)$-connection
\begin{equation*}
\alpha_{\tilde{\kappa}} \coloneq 2\pi i \left(ds+\frac{1}{2}\sigma(dx,x)+\sigma(dx,\tilde{\kappa})\right).
\end{equation*}
Moreover, there is no $\textup{U}(1)$-equivariant lift of $M$ preserving $\alpha_{\tilde{\kappa}'}$ if $\tilde{\kappa}'\neq \tilde{\kappa}$.
\end{proposition}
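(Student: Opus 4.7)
The strategy would split into four parts. First, I would lift $\widetilde{M}$ to $\mathbb{H}^{\text{red}}_n$ and verify it descends to $\Gamma\setminus \mathbb{H}^{\text{red}}_n$ by checking the identity $\widetilde{M}\circ(w, e^{\pi i Q(w)}) = (Mw, e^{\pi i Q(Mw)})\circ \widetilde{M}$ for all $w\in\Z^{2n}$. The base component $Mw\in \Z^{2n}$ is immediate from $M\in \text{Sp}(2n,\Z)$, and the matching of fiber phases (modulo $\Z$), after using $\sigma(Mw,Mx)=\sigma(w,x)$ and expanding $\sigma(\varphi_M,M(x+w))$, reduces to the congruence $Q(Mw)-Q(w)\equiv -\sigma(\varphi_M, Mw)\pmod{2\Z}$, which is exactly Lemma~\ref{lemma:dyat} applied with $w$ replaced by $Mw$.

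Next, to show $\widetilde{M}^*\alpha_{\tilde{\kappa}} = \alpha_{\tilde{\kappa}}$, I would substitute $dx'=M\,dx$, $ds'=ds+\tfrac{1}{2}\sigma(\varphi_M,M\,dx)$ and use $\sigma(Mu,Mv)=\sigma(u,v)$ together with $\sigma(Mu,v)=\sigma(u,M^{-1}v)$ (which follows from $M^{T}JM=J$). The difference $\widetilde{M}^*\alpha_{\tilde{\kappa}} - \alpha_{\tilde{\kappa}}$ collapses to $2\pi i\,\sigma\bigl(dx,\, (I-M^{-1})\tilde{\kappa}-\tfrac{1}{2}M^{-1}\varphi_M\bigr)$, which vanishes precisely when $(I-M)\tilde{\kappa}=\tfrac{1}{2}\varphi_M$. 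Hyperbolicity of $M$ guarantees $I-M$ is invertible, so the formula for $\tilde{\kappa}$ is unambiguous.

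Uniqueness up to a phase is a standard gauge argument: if $\widetilde{M}'$ is another such lift, then $\widetilde{M}^{-1}\circ\widetilde{M}'$ is a $\text{U}(1)$-equivariant bundle automorphism projecting to the identity on $\mathbb{T}^{2n}$, hence multiplication by some $h\in \mathcal{C}^\infty(\mathbb{T}^{2n},\text{U}(1))$; preservation of $\alpha_{\tilde{\kappa}}$ then forces $h^{-1}dh=0$, and connectedness of $\mathbb{T}^{2n}$ makes $h$ a global phase. For the uniqueness of $\tilde{\kappa}$, which is the most delicate part, I would parametrize an arbitrary $\text{U}(1)$-equivariant lift as $\widetilde{M}'(x,e^{2\pi is})=(Mx,e^{2\pi i(s+f(x))})$ and derive two constraints on $f$: connection preservation forces $df=\sigma(dx,(I-M^{-1})\tilde{\kappa}')$, so $f$ is affine of the form $\sigma(x,(I-M^{-1})\tilde{\kappa}')+c$; and descent to $\Gamma\setminus \mathbb{H}^{\text{red}}_n$ demands $f(x+w)-f(x)\equiv \tfrac{1}{2}[Q(Mw)-Q(w)]\pmod{\Z}$ for all $w\in\Z^{2n}$, which via Lemma~\ref{lemma:dyat} pins $(I-M)\tilde{\kappa}'$ down to $\tfrac{1}{2}\varphi_M$. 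The main obstacle will be carefully matching this constraint with the equivalence of prequantum bundles from Proposition~\ref{proposition:classification} so as to conclude $\tilde{\kappa}'=\tilde{\kappa}$ in the sense used by the statement.
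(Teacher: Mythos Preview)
Your approach matches the paper's: verify descent via Lemma~\ref{lemma:dyat}, compute $\widetilde{M}^*\alpha_{\tilde{\kappa}}$ directly, then parametrize an arbitrary lift as $(x,e^{2\pi is})\mapsto(Mx,e^{2\pi i(s+h(x))})$ and use connection-preservation together with descent to pin down $h$ (the only variation being that the paper handles uniqueness-up-to-phase through the same parametrization rather than your cleaner gauge argument on $\widetilde{M}^{-1}\circ\widetilde{M}'$). You are also right to flag the final step as the delicate one: the paper simply asserts that descent forces $h(x)=\tfrac12\sigma(\varphi_M,Mx)$, whereas you correctly anticipate that descent alone leaves a $\Z^{2n}$-ambiguity that must be reconciled with Proposition~\ref{proposition:classification}.
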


\begin{proof}
The map $\widetilde{M}$ is well defined on the quotient $\Gamma\setminus  \mathbb{H}^{\textup{red}}_{n}$ because it preserves the subgroup $\Gamma$. Indeed, for any $w\in \mathbb{Z}^{2n}$, we have
\begin{equation*}
\left(Mw, e^{\pi i( Q(w)+\sigma(\varphi_{M},Mw))}\right)=\left(Mw, e^{\pi i Q(Mw)}\right)
\end{equation*}
thanks to relation \eqref{eq:dyat}. The fact that $\widetilde{M}$ is a $\text{U}(1)$-equivariant lift of $M$ is then clear. Since $M$ is hyperbolic, the definition of $\tilde{\kappa} $ makes sense. We claim that $\widetilde{M}$ preserves the principal $\text{U}(1)$-connection $\alpha_{\tilde{\kappa}}$. To see why this is true, note that writing $(x', e^{2\pi is'})=\widetilde{M}(x,e^{2\pi is})$ is equivalent to having $x'=Mx$ and $s'=s+\frac{1}{2}\sigma(\varphi_{M}, Mx)$. As a result, by using equation \eqref{eq:prequantum-condition},  we obtain
\begin{equation}\label{eq:preserve-connection-calc}
\begin{split}
\widetilde{M}^*\alpha_{\tilde{\kappa}} &= 2\pi i \left(ds'+\frac{1}{2}\sigma(dx',x')+\sigma(dx',\tilde{\kappa})\right)\\
&=2\pi i \left(ds+\frac{1}{2}\sigma(M^{-1}\varphi_{M}, dx)+\frac{1}{2}\sigma(dx,x)+\sigma(dx,M^{-1}\tilde{\kappa})\right)\\
&=2\pi i \left(ds+\frac{1}{2}\sigma(dx,x)+\sigma(dx,\tilde{\kappa})\right).
\end{split}
\end{equation}

A $\text{U}(1)$-equivariant lift of $M$ must be of the form $(x, e^{2\pi i s})\mapsto (Mx, e^{2\pi i (s + h(x))})$ for some $h\in \mathcal{C}^\infty(\mathbb{R}^{2n}, \mathbb{R})$ with the right properties. If such a lift preserves a connection of the form $\alpha_{\tilde{\kappa}'}$ for some $\tilde{\kappa}'\in \mathbb{R}^{2n}$, then a calculation analogous to computation \eqref{eq:preserve-connection-calc} shows  that $\nabla h$ is constant, so $h$ must be an affine function. Since we are ignoring global phases, we can take $h$ to be linear. Then, the fact that the lift is well defined on $\Gamma\setminus  \mathbb{H}^{\textup{red}}_{n}$ imposes $h(x)=\frac{1}{2}\sigma(\varphi_M, Mx)$, yielding the uniqueness result. Another calculation like \eqref{eq:preserve-connection-calc} also yields the last statement of the proposition.
\end{proof}





Let us put together the existence and uniqueness statements from this section. Prequantum bundles over $(\mathbb{T}^{2n}, \omega)$ exist, and they are equivalent to exactly one of $(\Gamma\setminus \mathbb{H}^{\text{red}}_{n}, \pi, \alpha_\kappa)$ with $\kappa\in [0,1)^{2n}$. For any hyperbolic $M\in \textup{Sp}(2n,\mathbb{Z})$, there is a unique $\tilde{\kappa}\in \mathbb{R}^{2n}$ and a lift $\widetilde{M}$ (unique up to global phase) such that $\alpha_{\tilde{\kappa}}$ is preserved by $\widetilde{M}$. As in the proof of Proposition \ref{proposition:classification}, an explicit gauge $g_f$ sends $\alpha_{\tilde{\kappa}}$ to $\alpha_\kappa$, where $\kappa\in [0, 1)^{2n}$ is the fractional part of $\tilde{\kappa}\in \mathbb{R}^{2n}$. The conjugated lift $g_f\widetilde{M}g_f^{-1}$ then preserves $\alpha_\kappa$.

Any other prequantum cat map on $(\Gamma\setminus \mathbb{H}^{\text{red}}_{n}, \pi)$ must preserve a principal $\text{U}(1)$-connection equivalent to one of $\alpha_\kappa$ with $\kappa\in [0,1)^{2n}$. Then, we can conjugate it via a gauge transformation to the unique (up to global phase) prequantum cat map preserving $\alpha_{\tilde{\kappa}}$ with $\tilde{\kappa}\in \mathbb{R}^{2n}$ satisfying equation \eqref{eq:prequantum-condition}. It follows in particular that $\kappa$ must be the fractional part of $\tilde{\kappa}$.

Conjugation by gauge transformations does not affect the prequantum transfer operator since the latter commutes with the action of $\text{U}(1)$, so it is enough for us to consider the prequantum cat maps explicitly described in Proposition \ref{prop:explicit-construction-map}. 

\section{The relationship between prequantum and quantum Hilbert spaces}


We turn our attention to the functions on the prequantum bundle $\Gamma \setminus \mathbb{H}^{\text{red}}_n$, which play a fundamental role in the description of the dynamics. This effort will turn out to surface an intimate link between the prequantum and quantum Hilbert spaces.

We view a function in $\mathcal{C}^\infty(\Gamma\setminus \mathbb{H}^{\text{red}}_{n})$ as a smooth $\Gamma$-invariant function on $\mathbb{H}^{\text{red}}_{n}$. Unravelling the definitions, this is a function $u\in \mathcal{C}^\infty(\mathbb{R}^{2n}\times \mathbb{S}^1)$ satisfying
\begin{equation}\label{eq:pre-condition}
u\left(x+w, e^{2\pi i\left(s+\frac{1}{2} Q(w)+\frac{1}{2} \sigma( w, x)\right)}\right)=u\left(x, e^{2\pi is}\right)\quad \text{ for all }w\in \mathbb{Z}^{2n}.
\end{equation}
Belonging to the subspace $\mathcal{C}^\infty_N(\Gamma\setminus \mathbb{H}^{\text{red}}_{n})$ imposes the additional restriction
\begin{equation*}
u\left(x, e^{2\pi is}\right)=e^{2\pi iNs}u(x, 1).
\end{equation*}

The Lebesgue measure is left and right translation invariant on $\mathbb{H}^{\text{red}}_{n}$. Hence, it is the Haar measure on $\mathbb{H}^{\text{red}}_{n}$ and the group is unimodular. It is also the unique Haar measure such that $\Gamma\setminus \mathbb{H}^{\text{red}}_{n}$ has total measure one. We hence suppress it from the notation in what follows. We endow $\mathcal{C}^\infty(\Gamma\setminus \mathbb{H}^{\text{red}}_{n})$ with the inner product obtained by integrating over a fundamental domain for $\Gamma$ such as the unit cube $[0,1)^{2n+1}$, yielding the Hilbert space $L^2(\Gamma\setminus \mathbb{H}^{\text{red}}_{n})$.


\begin{definition}
For each $N\in \mathbb{Z}$, the \emph{prequantum Hilbert space} $\widetilde{\mathcal{H}}_N$ is defined to be the $L^2$ completion of $\mathcal{C}^\infty_N(\Gamma\setminus \mathbb{H}^{\text{red}}_{n})$.
\end{definition}

We have already implicitly noted the orthogonal decomposition
\begin{equation*}
L^2(\Gamma\setminus \mathbb{H}^{\text{red}}_{n})=\bigoplus_{N\in \mathbb{Z}} \widetilde{\mathcal{H}}_{N}.
\end{equation*}

\subsection{The Schrödinger representation} 


How may one construct a function on the prequantum bundle $\Gamma\setminus \mathbb{H}^{\text{red}}_n$? We take our hint from the condition \eqref{eq:pre-condition}. Given a representation $\rho$ of $\mathbb{H}^{\text{red}}_n$ on the space of Schwartz functions $\mathcal{S}(\mathbb{R}^n)$ and a tempered distribution $\nu\in \mathcal{S}'(\mathbb{R}^n)$ that is $\Gamma$-invariant in the sense that
\begin{equation}\label{eq:gamma-invariant}
\langle\nu, f\rangle_{\mathcal{D}'(\mathbb{R}^n)}=\langle \nu, \rho\left(w, e^{\pi i Q(w)}\right) f\rangle_{\mathcal{D}'(\mathbb{R}^n)} \quad \text{ for all }w\in \mathbb{Z}^{2n}, \, f\in \mathcal{S}(\mathbb{R}^n),
\end{equation}
then we can obtain a family of functions in $\mathcal{C}^\infty(\Gamma\setminus  \mathbb{H}^{\text{red}}_n)$ by considering
\begin{equation*}
(x, e^{2\pi i s})\mapsto \langle \nu, \rho(x, e^{2\pi i s}) f\rangle_{\mathcal{D}'(\mathbb{R}^n)}
\end{equation*}
for any $f\in \mathcal{S}(\mathbb{R}^n)$. To land in $\mathcal{C}_N^\infty(\Gamma\setminus  \mathbb{H}^{\text{red}}_n)$, we require that $\rho(0,e^{2\pi i s})=e^{2\pi i N s}I$. 

 We are thus naturally led to consider the Schrödinger representation $\rho_N$ of $\mathbb{H}^{\text{red}}_n$ on $L^2(\mathbb{R}^n)$. Recall (see \cite[Section 1.3]{folland89}) that this representation, defined by
\begin{equation}\label{eq:schrodinger-representation}
\rho_N(x, e^{2\pi i s})f(y)\coloneq e^{2\pi i N \left(s+\langle p,y\rangle-\frac{1}{2}Q(x)\right)}f(y-q), \quad  f\in \mathcal{S}(\R^n),
\end{equation}
is unitary, faithful and irreducible for $N\geq 1$. We also have the characteristic commutator formula
\begin{equation}\label{eq:translation-commutator}
\rho_N(x, e^{2\pi i s})\rho_N(x', e^{2\pi i s'})=e^{2\pi i N \sigma (x, x')}\rho_N(x', e^{2\pi i s'})\rho_N(x, e^{2\pi i s}).
\end{equation}


As a brief aside, which we shall revisit at a later time, we further we note that the Schrödinger representation is intimately related to quantum translations. Indeed, after extending the Schrödinger representation to $\mathcal{S}'(\mathbb{R}^n)$ by duality,  each operator
\begin{equation*}
\rho_N(x)\coloneq \rho_N(x,1)
\end{equation*}
is called a  \emph{quantum translation}.  Although the map $x\mapsto \rho_N(x)$ is not a group homomorphism because
\begin{equation*}
\rho_N(x)\rho_N(x')=e^{\pi i N \sigma(x, x')}\rho_N(x+x'),
\end{equation*}
there are two main reasons we still speak of quantum translations. Both justifications rely on the key observation that, if we set the semiclassical parameter
\begin{equation*}\label{eq:semiclassical}
h\coloneq \dfrac{1}{2\pi N},
\end{equation*} then by \cite[Theorem 4.7]{zworski12} we have
\begin{equation*}
\rho_N(x)=\text{Op}_h\left(\exp\left(\frac{i}{h}\sigma(x,z)\right)\right)=\exp\left(-\dfrac{i}{h}\text{Op}_h(\sigma(z,x))\right).
\end{equation*}

Since classical translation by $x\in \mathbb{R}^{2n}$ is the time-$1$ flow generated by the Hamiltonian $z\mapsto\sigma(z,x)$, it is reasonable to call $\rho_N(x)$ a quantum translation as it is the time-$1$ propagator generated by the corresponding quantum Hamiltonian.

Another reason is that $\rho_N(x)$ satisfies the following exact Egorov property:
\begin{equation}\label{eq:egorov}
\rho_N(x)^{-1}\text{Op}_h(a)\rho_N(x)=\text{Op}_h(a(\cdot+x))\quad \text{ for all } a\in S(1).
\end{equation}
We refer the reader to \cite{zworski12} for any necessary refreshers on Weyl quantization on $\mathbb{R}^n$, symbol classes such as $S(1)$, and standard results in semiclassical analysis.


\subsection{Appearance of the quantum Hilbert space} Building on the previous section, we can slightly  relax the condition \eqref{eq:gamma-invariant} and still obtain functions on the prequantum bundle. Indeed, it would be enough if there existed $\theta\in \mathbb{R}^{2n}$ such that
\begin{equation*}
\rho_N\left(w, e^{\pi i Q(w)}\right)\nu=e^{2\pi i \sigma(\theta,w) }\nu\quad \text{ for all } w\in \mathbb{Z}^{2n}
\end{equation*}
because then, by the commutator formula \eqref{eq:translation-commutator}, we could again obtain functions in  $\mathcal{C}^\infty_N(\Gamma\setminus \mathbb{H}^{\text{red}}_n)$ by considering
\begin{equation}\label{eq:intertwin-orig}
(x, e^{2\pi i s})\mapsto \left\langle\rho_N\left(\tfrac{\theta}{N}\right) \nu, \rho_N(x, e^{2\pi i s}) f\right\rangle_{\mathcal{D}'(\mathbb{R}^n)}.
\end{equation}
It hence behoves us to understand the following subspace.
\begin{definition}\label{def:quantum-hilbert}
Given $N\in \mathbb{N}^*$ and $\theta\in \mathbb{T}^{2n}$, the \emph{quasi-periodic distributions} are
\begin{equation*}
\mathcal{H}_{N, \theta} \coloneq \left\{\nu\in \mathcal{S}'(\mathbb{R}^n)\mid \rho_N\left(w, e^{\pi iQ(w)}\right)\nu=e^{2\pi i \sigma(\theta, w)}\nu\quad \text{for all } w\in \mathbb{Z}^{2n} \right\}.
\end{equation*}
\end{definition}
Intuitively, these are $\mathbb{Z}^n$-periodic (up to a phase) tempered distributions on $\mathbb{R}^n$   whose $h$-Fourier transform is also $\mathbb{Z}^n$-periodic (again, up to phase).

As it turns out, once we endow $\mathcal{H}_{N,\theta}$ with an inner product, we obtain nothing short of the usual quantum Hilbert space, sometimes also called the space of quantum states. To see how we may define an inner product, we start by noticing that it is finite dimensional. In what follows, we shall write
\begin{equation*}
\mathbb{Z}_N\coloneq \{0,\dotsc, N-1\}.
\end{equation*}

\begin{lemma}
The space $\mathcal{H}_{N, \theta} $ is $N^n$-dimensional with a basis given by $\{e^{\theta}_j\}_{j\in \mathbb{Z}^n_N}$, where for $\theta=(\theta_1, \theta_2)\in \mathbb{R}^{2n}$ we define
\begin{equation}\label{eq:basis}
e_j^{\theta}(y)\coloneq N^{-n/2}\sum_{k\in \mathbb{Z}^n}e^{-2\pi i \langle \theta_2, k\rangle}\delta\left(y-k-\frac{j-\theta_1}{N}\right).
\end{equation}

\end{lemma}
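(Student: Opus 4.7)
The plan is to directly solve the defining relation of $\mathcal{H}_{N,\theta}$ by unpacking the Schrödinger representation. Applying the formula \eqref{eq:schrodinger-representation} to the cases $w = (m, 0)$ and $w = (0, n)$, and noting that $Q(w) = 0$ in both, the condition $\rho_N(w, e^{\pi i Q(w)})\nu = e^{2\pi i \sigma(\theta, w)}\nu$ translates into the pair of scalar identities
\begin{equation*}
\nu(y - m) = e^{2\pi i \langle \theta_2, m\rangle} \nu(y), \qquad e^{2\pi i N \langle n, y\rangle} \nu(y) = e^{-2\pi i \langle n, \theta_1\rangle} \nu(y),
\end{equation*}
valid for all $m, n \in \Z^n$. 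Conversely, a short use of \eqref{eq:translation-commutator} shows that this pair already implies the general $w \in \Z^{2n}$ relation, so membership in $\mathcal{H}_{N,\theta}$ is equivalent to the two families above.

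First I would exploit the modulation relation. Specializing to $n = e_k$ shows that the smooth function $f_k(y) := e^{2\pi i N y_k} - e^{-2\pi i \theta_{1,k}}$ annihilates $\nu$; since $f_k$ vanishes to first order on the hyperplanes $\{y_k = (j - \theta_{1,k})/N\}_{j \in \Z}$, the support of $\nu$ lies in the discrete lattice $\Lambda_\theta := \frac{1}{N}(\Z^n - \theta_1)$. Near each $y_m \in \Lambda_\theta$, $\nu$ is therefore a finite linear combination of derivatives of $\delta_{y_m}$. Using the identity
\begin{equation*}
f_k \cdot \partial^\alpha \delta_{y_m} = \sum_{\beta_k = 1}^{\alpha_k} \binom{\alpha_k}{\beta_k} (-1)^{\beta_k} f_k^{(\beta_k)}(y_m) \, \partial^{\alpha - \beta_k e_k} \delta_{y_m}
\end{equation*}
together with $f_k'(y_m) \neq 0$, and comparing the top-order term in the $k$-th index, the equation $f_k \nu = 0$ forces every $\delta$-derivative along $y_k$ to vanish. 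Iterating over $k = 1, \ldots, n$ collapses the local expansion to a pure multiple of $\delta_{y_m}$, so
\begin{equation*}
\nu = \sum_{m \in \Z^n} c_m \, \delta_{y_m}, \qquad y_m := \frac{m - \theta_1}{N},
\end{equation*}
with $\{c_m\}_m$ of at most polynomial growth by temperedness.

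Feeding this ansatz into the translation relation and using $y_m - \ell = y_{m - N\ell}$, I obtain the recursion $c_{m + N\ell} = e^{-2\pi i \langle \theta_2, \ell\rangle} c_m$ for every $\ell \in \Z^n$. The coefficients are therefore pinned down by their $N^n$ values on the cosets $\Z_N^n \subseteq \Z^n$, and every such choice produces a bounded (hence tempered) sequence. Picking $c_j = N^{-n/2}$ and $c_{j'} = 0$ for $j' \in \Z_N^n \setminus \{j\}$ yields precisely the distribution $e_j^\theta$ of \eqref{eq:basis}; linear independence is immediate from the disjointness of supports within a fundamental domain for the $N\Z^n$-action. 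This exhibits $\{e_j^\theta\}_{j \in \Z_N^n}$ as a basis of $\mathcal{H}_{N,\theta}$, so $\dim \mathcal{H}_{N,\theta} = N^n$.

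The main technical step is the derivative-killing argument in the second paragraph: the algebraic manipulation of $f_k \cdot \partial^\alpha \delta_{y_m}$ must be handled carefully, but after observing that $f_k$ depends only on $y_k$ and has a simple zero at $y_{m,k}$, the order reduction is a clean finite induction on $\alpha_k$.
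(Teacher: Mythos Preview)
Your argument is correct. The paper itself does not prove this lemma; it simply refers the reader to \cite[Lemma~2.5]{dyatlov-jezequel}. Your self-contained proof is essentially the standard one that appears in that reference: reduce the $\Gamma$-invariance to the two generating families $w=(m,0)$ and $w=(0,n)$, use the multiplicative relations $f_k\nu=0$ to force the support onto the lattice $\tfrac{1}{N}(\Z^n-\theta_1)$ with no $\delta$-derivatives, and then use the translation relations to obtain the $N\Z^n$-quasiperiodicity of the coefficients. The derivative-killing step is handled cleanly by your order-reduction on $\alpha_k$, since $f_k$ depends on $y_k$ alone and has only simple zeros. One cosmetic remark: the clause ``of at most polynomial growth by temperedness'' is unnecessary here, because the recursion $c_{m+N\ell}=e^{-2\pi i\langle\theta_2,\ell\rangle}c_m$ already forces $|c_m|$ to be bounded, as you note a sentence later.
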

\begin{remark} The tempered distributions $e^\theta_j$ satisfy the identities
\begin{equation}\label{eq:orthonormal-basis-relationships}
\begin{aligned}
e^{\theta+w}_j&=e^{\theta}_{j-m}\quad \text{for all } w=(m,n)\in \mathbb{Z}^{2n},\\
e^{\theta}_{j+Nl}&=e^{2\pi i \langle \theta_2, l\rangle}e^{\theta}_j\quad \text{for all } l\in \mathbb{Z}^n.
\end{aligned}
\end{equation}
\end{remark}

We refer to the proof of \cite[Lemma 2.5]{dyatlov24} for justification. While the space $\mathcal{H}_{N,\theta}$ is canonically defined for $\theta\in \mathbb{T}^{2n}$, the basis $\{e^{ \theta}_j\}_{j\in \mathbb{Z}^n_N}$ depends on a choice of representative $\theta_1\in \mathbb{R}^n$. We may turn each $\mathcal{H}_{N,\theta}$ into a Hilbert space if we define an inner product characterized by making $\{e^{ \theta}_j\}_{j\in \mathbb{Z}^n_N}$ an orthonormal basis. As follows from the identities \eqref{eq:orthonormal-basis-relationships}, this choice of inner product depends only on $\theta\in \mathbb{T}^{2n}$.

By the commutator formula \eqref{eq:translation-commutator} and our choice of inner product on $\mathcal{H}_{N,\theta}$, each quantum translation $\rho_N(x)$ is a unitary transformation from $\mathcal{H}_{N, \theta}$ onto $\mathcal{H}_{N, \theta-Nx}$. Since we will use this observation a few times, we introduce some notation.
\begin{definition}
For $N\in \mathbb{N}^*$ and $\theta\in \mathbb{T}^{2n}$, we denote by $T_{N, \theta}:\mathcal{H}_{N, \theta}\to \mathcal{H}_{N, 0}$ the unitary transformation given by
\begin{equation}\label{eq:hilbert-translation}
T_{N, \theta}\coloneq\rho_N\left(\dfrac{\{\theta\}}{N}\right),
\end{equation}
where $\{\theta\}\in [0,1)^{2n}$ denotes the fractional part of any representative of $\theta$ in $\mathbb{R}^{2n}$.
\end{definition}

\subsection{Decomposition of the prequantum Hilbert spaces} 

Having unveiled the nature of the $\Gamma$-invariant tempered distributions using the Schrödinger representation, we elevate the observation \eqref{eq:intertwin-orig} to a definition.
\begin{definition}
For $N\in \mathbb{N}^*$ and $\theta\in \mathbb{T}^{2n}$, we denote by
\begin{equation*}
W_{N,\theta}: \mathcal{H}_{N, \theta}\otimes \mathcal{S}(\mathbb{R}^n)\to \mathcal{C}^\infty_N( \Gamma\setminus \mathbb{H}^{\text{red}}_n)
\end{equation*}
the pairing characterized by
\begin{equation*}
\nu\otimes f\mapsto N^{n/2}\left\langle T_{N, \theta}\nu, \rho_N(\cdot)f\right\rangle_{\mathcal{D}'(\mathbb{R}^n)}.
\end{equation*}
\end{definition}


Using equations \eqref{eq:schrodinger-representation} and  \eqref{eq:basis}, we note that, for each $f\in \mathcal{S}(\mathbb{R}^n)$, we have
\begin{equation}\label{eq:unraveled-definition}
\begin{split}
W_{N,0}(e^{0}_j\otimes f)(x, e^{2\pi i s})&=\sum_{k\in \mathbb{Z}^n}\rho_N(x, e^{2\pi i s})f\left(k+\frac{j}{N}\right)\\
&=e^{2\pi i N (s+\frac{1}{N}\langle p, j\rangle-\frac{1}{2}Q(x))}\sum_{k\in \mathbb{Z}^n}e^{2\pi i N \langle p, k\rangle}f\left(k+\frac{j}{N}-q\right).
\end{split}
\end{equation}
Thus, the map $W_{N,0}$ is closely related to the Fourier--Wigner and Weil--Brazin transforms, well-known objects used in the harmonic analysis of Heisenberg nilmanifolds (see for instance \cite[Section 1.4]{folland89} or \cite[Section3]{folland04} for a discussion).

The next result captures the idea that the prequantum Hilbert spaces $\widetilde{\mathcal{H}}_N$, which are infinite-dimensional, contain the finite-dimensional quantum Hilbert spaces $\mathcal{H}_{N,\theta}$.


 
\begin{proposition}\label{proposition:unitary-identification}
Each operator $W_{N, \theta}$ initially defined on $\mathcal{H}_{N,\theta}\otimes \mathcal{S}(\mathbb{R}^n)$ extends to $\mathcal{H}_{N,\theta}\otimes L^2(\mathbb{R}^n)$ as a unitary transformation onto $\widetilde{\mathcal{H}}_N$.
\end{proposition}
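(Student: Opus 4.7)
The plan is to reduce to the case $\theta=0$ and then carry out a Fourier analysis, first in $s$ and then in $p$, to realize every element of $\widetilde{\mathcal{H}}_N$ as an image of $U_{N,0}$ via a Weil-Brazin-type unfolding. First I would use the unitary $T_{N,\theta}:\mathcal{H}_{N,\theta}\to\mathcal{H}_{N,0}$ from \eqref{eq:hilbert-translation} to factor $U_{N,\theta}=U_{N,0}\circ(T_{N,\theta}\otimes I)$, so it suffices to treat $\theta=0$.

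To prove isometry, I would work with the orthonormal basis $\{e^0_j\}_{j\in\Z_N^n}$ of $\mathcal{H}_{N,0}$ and the explicit formula \eqref{eq:unraveled-definition}. The inner product $\langle U_{N,0}(e^0_j\otimes f),\,U_{N,0}(e^0_{j'}\otimes f')\rangle_{L^2}$ is then an integral over the fundamental cube $[0,1)^{2n+1}$, which I would evaluate in three stages: the $s$-integration cancels the common factor $e^{2\pi iNs}$; the $p$-integration over $[0,1)^n$ produces $\delta_{(j-j')+N(k-k'),\,0}$, which forces $j=j'$ and $k=k'$ since $j,j'\in\Z_N^n$; and the remaining $q$-integration over $[0,1)^n$, combined with the surviving sum over $k\in\Z^n$, unfolds to $\int_{\R^n}f\,\overline{f'}\,dy$. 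The net result $\delta_{jj'}\langle f,f'\rangle_{L^2(\R^n)}$ matches the inner product on $\mathcal{H}_{N,0}\otimes L^2(\R^n)$, so $U_{N,0}$ extends by density to an isometry into $\widetilde{\mathcal{H}}_N$.

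The main obstacle is surjectivity, which I would handle by constructing an explicit inverse. Given $u\in\widetilde{\mathcal{H}}_N$, write $u(x,e^{2\pi is})=e^{2\pi iNs}v(x)$ and set $\widetilde{v}(q,p):=e^{\pi iN\langle q,p\rangle}v(q,p)$; a routine check from \eqref{eq:pre-condition} shows that $\widetilde{v}$ is $\Z^n$-periodic in $p$ and satisfies $\widetilde{v}(q+m,p)=e^{2\pi iN\langle m,p\rangle}\widetilde{v}(q,p)$. Fourier-expanding in $p$ produces coefficients $c_\alpha(q)$, $\alpha\in\Z^n$, tied together by the relation $c_{j+Nk}(q)=c_j(q-k)$, so after decomposing $\alpha=j+Nk$ with $j\in\Z_N^n$ all of the $c_\alpha$ are determined by the $N^n$ functions $c_j:\R^n\to\mathbb{C}$. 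Setting $f_j(y):=c_j(j/N-y)$ and comparing with \eqref{eq:unraveled-definition} then gives $u=U_{N,0}\bigl(\sum_{j\in\Z_N^n} e^0_j\otimes f_j\bigr)$, while Parseval in $p$ yields $\|u\|^2_{L^2}=\sum_j\|f_j\|^2_{L^2(\R^n)}$, so each $f_j$ automatically lies in $L^2(\R^n)$. This produces the required preimage in $\mathcal{H}_{N,0}\otimes L^2(\R^n)$ and completes the proof.
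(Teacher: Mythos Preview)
Your proof is correct and follows the same overall architecture as the paper: reduce to $\theta=0$ via $U_{N,\theta}=U_{N,0}\circ(T_{N,\theta}\otimes I)$, verify isometry by the same explicit computation with the basis $\{e^0_j\}$, and establish surjectivity by a Weil--Brazin type Fourier unfolding.

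The one noteworthy difference lies in how surjectivity is carried out. The paper (adapting Thangavelu) first swaps coordinates, setting $v(x)=u(p,-q,1)$, then builds auxiliary averages $v_{j,k}(x)=e^{-\pi i\langle k,p\rangle}e^{-2\pi i\langle j,k\rangle/N}v(q+k/N,p)$ and $v_j=N^{-n}\sum_k v_{j,k}$, checks that $e^{-2\pi i\langle j,q\rangle}e^{-\pi iNQ(x)}v_j$ is $\tfrac{1}{N}$-periodic in $q$, Fourier expands in $q$, and reads off $f_j$ from the zeroth coefficient. Your route is more direct: you absorb the phase once by setting $\widetilde v=e^{\pi iN\langle q,p\rangle}v$, observe that $\widetilde v$ is already $\Z^n$-periodic in $p$, and Fourier expand in $p$; the quasi-periodicity in $q$ then immediately gives the recursion $c_{j+Nk}(q)=c_j(q-k)$ and the identification $f_j(y)=c_j(j/N-y)$. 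Your argument avoids the intermediate $v_{j,k}$ construction and makes the $L^2$ membership of each $f_j$ fall out of Parseval in one line, whereas the paper's route is closer to the existing literature on Heisenberg nilmanifolds. Both are valid; yours is the more economical of the two.
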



\begin{proof}
It suffices to consider $\theta=0$ thanks to the following commutative diagram.
\begin{center}
\begin{tikzcd}
\mathcal{H}_{N, \theta}\otimes L^2(\mathbb{R}^n) \arrow{dr}[swap]{W_{N, \theta}} \arrow{rr}{T_{N, \theta}\otimes I} && \mathcal{H}_{N, 0}\otimes L^2(\mathbb{R}^n) \arrow{dl}{W_{N,0}}\\
& \widetilde{\mathcal{H}}_N
\end{tikzcd}
\end{center}

We first show the existence of a unique bounded extension. For $f, g\in \mathcal{S}(\mathbb{R}^n)$, equation \eqref{eq:unraveled-definition} and Fubini's theorem give us
\begin{equation*}
\begin{split}
\langle W_{N,0}(e^{0}_{j}\otimes f), W_{N,0}(e^{0}_{l}\otimes g)\rangle_{\widetilde{\mathcal{H}}_N}&=N^n\left\langle \langle e^{ 0}_{j}, \rho_N(\cdot )f\rangle_{\mathcal{D}'(\mathbb{R}^n)}, \langle e^{0}_{ l}, \rho_N(\cdot )g\rangle_{\mathcal{D}'(\mathbb{R}^n)}\right\rangle_{L^2([0,1)^{2n})}\\
&=\sum_{k, k'\in \mathbb{Z}^n}\int_{[0,1)^{n}}e^{2\pi i  \langle p, N(k-k')+j-l\rangle}\, dp\\
&\qquad\qquad\int_{[0,1)^n}f\left(k+\frac{j}{N}-q\right)   \overline{g}\left(k'+\frac{l}{N}-q\right)\, dq\\
&=\delta_{j=l}\sum_{k\in \mathbb{Z}^n}\int_{[0,1)^n-k}f\left(\frac{j}{N}-q\right)\overline{g}\left(\frac{j}{N}-q\right)\, dq\\
&=\delta_{j=l}\int_{\mathbb{R}^n}f\left(\frac{j}{N}-q\right)\overline{g}\left(\frac{j}{N}-q\right)\, dq\\
&=\left\langle e^{0}_{j}, e^{0}_{l}\right\rangle_{\mathcal{H}_{N, 0}}\langle f, g\rangle_{L^2(\mathbb{R}^n)}.
\end{split}
\end{equation*}

This also shows that $W_{N, 0}$ is unitary. To see why it is surjective onto $\widetilde{\mathcal{H}}_{N}$, we want to show that each $u\in \widetilde{\mathcal{H}}_N$ can be written as $u=\sum_{j\in \mathbb{Z}^n_N} W_{N,0}(e^{0}_j \otimes f_j)$ with $f_j\in L^2(\mathbb{R}^n)$. In what follows, we adapt the proof of \cite[Theorem 3.6]{thangavelu09}.




Let $v(x)\coloneq u(p,-q,1)$. For each $j\in \mathbb{Z}^n_N$ and $k\in \mathbb{Z}^n$, we further introduce
\begin{equation*}
v_{j, k}(x)\coloneq e^{-\pi i \langle k , p\rangle}e^{-\frac{2\pi i }{N}\langle j, k\rangle}v\left(q+\frac{k}{N},p\right)\quad \text{ and }\quad v_j\coloneq N^{-n}\sum_{k\in \mathbb{Z}^n_N}v_{j, k}.
\end{equation*}
This gives us $v=\sum_{j\in \mathbb{Z}^n_N}v_{j}$ because
\begin{equation*}
\sum_{j,k\in\mathbb{Z}^n_N } v_{j,k}(x)=\sum_{k\in \mathbb{Z}^n_N}e^{-\pi i \langle k , p\rangle}v\left(q+\frac{k}{N},p\right)\left(\sum_{j\in \mathbb{Z}^n_N}e^{- \frac{2\pi i}{N}\langle j , k\rangle}\right)=N^nv(x).
\end{equation*}
By the $\Gamma$-invariance of $u$ described in equation \eqref{eq:pre-condition}, we note that
\begin{equation}\label{eq:reduced-gamma-invariance}
v(x+w)=e^{\pi i N (Q(w)+\sigma(x,w))}v(x)\quad \text{ for all }w\in \mathbb{Z}^n.
\end{equation}
A quick calculation shows that each $v_j$ satisfies the same property. 
We claim that
\begin{equation}\label{eq:periodic-piece}
v_j\left(q+\frac{l}{N},p\right)=e^{\pi i \langle l , p\rangle}e^{\frac{2\pi i }{N}\langle j,l\rangle}v_j(x) \quad \text{ for all } l\in \mathbb{Z}^n.
\end{equation}
To see why this is true, note that applying the property \eqref{eq:reduced-gamma-invariance} yields
\begin{equation*}
\begin{split}
v_{j,k+Nl}(x)&=e^{-\pi i \langle k+Nl, p\rangle}e^{-\frac{2\pi i}{N}\langle j, k\rangle}v\left(q+\frac{k}{N}+l,p\right)\\
&=e^{-\pi i \langle k+Nl, p\rangle}e^{-\frac{2\pi i}{N}\langle j, k\rangle}e^{\pi i N \langle l, p\rangle}v\left(q+\frac{k}{N},p\right)\\
&=v_{j,k}(x),
\end{split}
\end{equation*}
and hence
\begin{equation*}
\sum_{k\in \mathbb{Z}^n_N}v_{j,k+l}=\sum_{k\in \mathbb{Z}^n_N}v_{j,k}.
\end{equation*}
This allows us to write
\begin{equation*}
\begin{split}
v_j\left(q+\frac{l}{N},p\right)&=N^{-n}\sum_{k\in \mathbb{Z}^n_N}e^{-\pi i \langle k, p\rangle}e^{-\frac{2\pi i}{N}\langle j, k\rangle}v\left(q+\frac{k+l}{N},p\right)\\
&=e^{\pi i\langle l, p\rangle}e^{\frac{2\pi i }{N}\langle j,l\rangle}N^{-n}\sum_{k\in \mathbb{Z}^n_N}v_{j,k+l}(x)\\
&=e^{\pi i \langle l , p\rangle}e^{\frac{2\pi i }{N}\langle j,l\rangle}v_j(x),
\end{split}
\end{equation*}
as desired. Property \eqref{eq:periodic-piece} implies that the function
\begin{equation*}
g_j(x)\coloneq e^{-2\pi i \langle j, q\rangle}e^{-\pi i N Q(x)}v_j(x)
\end{equation*}

is $\frac{1}{N}$-periodic in the $q$ variables. As a result, it admits a decomposition of the form
\begin{equation*}
g_j(x)=\sum_{k\in \mathbb{Z}^n}c_k(p)e^{2\pi i N\langle k, q\rangle},
\end{equation*}
where each $c_k$ is the Fourier mode given by
\begin{equation*}
c_k(p)\coloneq \int_{\left[0,\frac{1}{N}\right)^n}e^{-2\pi i N \langle k, q\rangle}g_j(x)\, dq.
\end{equation*}
Since property \eqref{eq:reduced-gamma-invariance} implies that $g_j(q, p-k)=g_j(x)e^{2\pi i N \langle k, q\rangle}$, we get the relation $c_k(p-k)=c_0(p)$. This leads to
\begin{equation*}
v_j(x)=e^{2\pi i\langle j, q\rangle}e^{\pi i N Q(x)}\sum_{k\in \mathbb{Z}^n}c_0(p+k)e^{2\pi i N \langle k , q\rangle}.
\end{equation*}
Comparing with equation \eqref{eq:unraveled-definition}, we get $v_j(x)=U_{N,0}(e^{0}_j\otimes f_j)(p,-q,1)$,
where
\begin{equation*}
f_j(y)\coloneq c_0\left(y+\frac{j}{N} \right).
\end{equation*}
We conclude by noticing that $f_j\in L^2(\mathbb{R}^n)$ and, as we wanted,
\begin{align*}
u(x, e^{2\pi i s})&=e^{2\pi i N s}v(-p, q)\\
&=e^{2\pi i N s}\sum_{j\in \mathbb{Z}^n_N}v_j(-p,q)\\
&=\sum_{j\in \mathbb{Z}^n_N}W_{N,0}(e^{0}_j\otimes f_j)(x,e^{2\pi i s}).
\qedhere
\end{align*}
\end{proof} 


The following corollary tells us that each $W_{N, \theta}$ preserves the smooth observables.

\begin{corollary}\label{corollary:observables-preserved}
Each operator $W_{N, \theta}$ restricts to an isomorphism
\begin{equation*}
 \mathcal{H}_{N, \theta}\otimes \mathcal{S}(\mathbb{R}^n)\to \mathcal{C}^\infty_N(\Gamma\setminus \mathbb{H}^{\textup{red}}_n).
\end{equation*}
\end{corollary}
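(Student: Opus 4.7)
The plan is to prove two inclusions leveraging the unitarity statement of Proposition~\ref{proposition:unitary-identification}. Since $\mathcal{H}_{N,\theta}$ is finite-dimensional, the content is entirely about the second tensor factor, and injectivity of the restricted map is automatic from the unitarity already established, so only the Schwartz-to-smooth and the smooth-to-Schwartz statements need checking.

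First, I would show that $U_{N,\theta}$ maps $\mathcal{H}_{N,\theta}\otimes\mathcal{S}(\R^n)$ into $\mathcal{C}^\infty_N(\Gamma\setminus\mathbb{H}^{\text{red}}_n)$. For a fixed $\nu\in\mathcal{H}_{N,\theta}$ and $f\in\mathcal{S}(\R^n)$, formula \eqref{eq:schrodinger-representation} shows that $(x,s)\mapsto \rho_N(x,e^{2\pi is})f$ is smooth from $\R^{2n+1}$ into $\mathcal{S}(\R^n)$: differentiating in $(x,s)$ produces polynomials in $y$ times Schwartz functions, all of which remain Schwartz. Pairing against the tempered distribution $T_{N,\theta}\nu$ therefore gives a smooth function on $\R^{2n+1}$, which by construction descends to $\mathcal{C}^\infty_N(\Gamma\setminus\mathbb{H}^{\text{red}}_n)$.

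Second, I would show surjectivity onto the smooth side. Using the commutative diagram of the preceding proof, it suffices to treat $\theta=0$. The proof of Proposition~\ref{proposition:unitary-identification} wrote $u=\sum_{j\in\Z^n_N}U_{N,0}(e^0_j\otimes f_j)$ with $f_j(y)=c_0(y+j/N)$, where $c_0$ was constructed as the zeroth Fourier coefficient in $q$ of the function $g_j(x)=e^{-2\pi i\langle j,q\rangle}e^{-\pi iNQ(x)}v_j(x)$, a function that is smooth on $\R^{2n}$ and $\frac{1}{N}$-periodic in $q$. What remains is to upgrade the conclusion $f_j\in L^2(\R^n)$ to $f_j\in\mathcal{S}(\R^n)$.

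This is the only real obstacle. The identity $c_k(p)=c_0(p+k)$ established in the earlier proof exhibits the single function $c_0$ on $\R^n$ as the assembled family of Fourier coefficients of the smooth periodic function $g_j(\cdot,p)$. Standard integration by parts in $q$ then gives, for every multi-index $\alpha$ and every $M\in\N$, the bound $|\partial_p^\alpha c_k(p)|\leq C_{\alpha,M,K}(1+|k|)^{-M}$ uniformly for $p$ in any compact set $K\subset\R^n$. Writing an arbitrary $y\in\R^n$ as $y=p+k$ with $p\in[0,1)^n$ and $k\in\Z^n$ translates this into $|\partial^\alpha c_0(y)|\leq C_{\alpha,M}(1+|y|)^{-M}$ for every $\alpha$ and $M$, so $c_0\in\mathcal{S}(\R^n)$ and hence $f_j\in\mathcal{S}(\R^n)$. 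Combined with the forward inclusion, this yields the claimed restricted isomorphism.
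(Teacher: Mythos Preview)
Your proposal is correct and follows essentially the same approach as the paper: both reduce to $\theta=0$, revisit the explicit inverse constructed in the proof of Proposition~\ref{proposition:unitary-identification}, and argue that when $u$ is smooth the functions $f_j(y)=c_0(y+j/N)$ lie in $\mathcal{S}(\R^n)$. The paper's proof is a one-line pointer to this fact, whereas you supply the missing details (the forward Schwartz-to-smooth inclusion, and the integration-by-parts argument exploiting $c_k(p)=c_0(p+k)$ to convert rapid Fourier decay in $k$ into Schwartz decay of $c_0$), which is entirely appropriate.
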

\begin{proof}
Again, it suffices to check for $W_{N, 0}$. Stepping through the proof of Proposition \ref{proposition:unitary-identification}, we note  that each $f_j$ belongs to $\mathcal{S}(\mathbb{R}^n)$ if $u\in \widetilde{\mathcal{H}}_N$ is taken to be smooth.
\end{proof}


\section{Study of the prequantum transfer operator}

We have just shown that the prequantum Hilbert space $\widetilde{\mathcal{H}}_N$ is unitarily equivalent to each tensor product $\mathcal{H}_{N, \theta}\otimes L^2(\mathbb{R}^n)$. We now turn our attention to the transfer operator associated to a prequantum cat map. In principle, there is no reason this operator, once conjugated to act on $\mathcal{H}_{N, \theta}\otimes L^2(\mathbb{R}^n)$, should behave nicely with respect to the tensor product decomposition. Nevertheless, we will show that, under the appropriate quantization condition between the classical cat map $M$ and the parameters $N$ and $\theta$,  not only does the conjugated transfer operator also decompose as a tensor product,  but the component acting on $\mathcal{H}_{N, \theta}$ is precisely the usual quantum cat map $U_{N, \theta}$.

\subsection{Quantum cat maps}\label{subsection:quantum-cat-maps}

In this section, we very briefly review quantum cat maps, referring the reader to \cite[Section 2.2]{dyatlov24} for more details. As before, we fix $N\in \mathbb{N}^*$ and set $h\coloneq (2\pi N)^{-1}$ so that we may use the two semiclassical parameters interchangeably depending on what notation is more convenient or standard.

The first building block is the quantization of observables on the torus $\mathbb{T}^{2n}$. Let us consider $a\in \mathcal{C}^\infty(\mathbb{T}^{2n})$. This is nothing but a smooth $\mathbb{Z}^{2n}$-periodic function on $\mathbb{R}^{2n}$. As a result, the observable belongs to the symbol class $S(1)$, so its Weyl quantization $\text{Op}_h(a)$ is a bounded operator on $L^2(\mathbb{R}^{n})$. The key observation is that, since $a$ is $\mathbb{Z}^{2n}$-periodic, the exact Egorov property \eqref{eq:egorov} yields the commutation relation
\begin{equation*}
\text{Op}_h(a)\rho_N(w)=\rho_N(w)\text{Op}_h(a)\quad \text{ for all }w\in \mathbb{Z}^{2n}.
\end{equation*}
Thanks to this feature, it then follows that the operator $\text{Op}_h(a)$,  which we extend to $\mathcal{S}'(\mathbb{R}^n)$ by duality as usual,  preserves each quantum Hilbert space $\mathcal{H}_{N,\theta}$. We can thus define the quantizations
\begin{equation*}
\text{Op}_{N, \theta}(a)\coloneq \left.\text{Op}_h(a)\right|_{\mathcal{H}_{N,\theta}}:\mathcal{H}_{N,\theta}\to \mathcal{H}_{N,\theta},
\end{equation*}
which depend smoothly on $\theta\in \mathbb{T}^{2n}$.

We next consider the quantization of a symplectic integral matrix $M\in \text{Sp}(2n,\mathbb{Z})$. If  $M=e^X$ for some $X\in \mathfrak{sp}(2n,\mathbb{R})$, then $M$ describes the time-$1$ dynamics of a Hamiltonian flow on $\mathbb{R}^{2n}$. Indeed, the flow generated by the quadratic Hamiltonian $H(z)\coloneq \sigma( X z, z)$ is $z(t)=e^{tX}z_0$. The quantum propagator {\small$U_N\coloneq \exp\left(-\frac{i}{h}\text{Op}_h(H)\right)$} is then a unitary operator on $L^2(\mathbb{R}^n)$ satisfying
\begin{equation}\label{eq:egorov-plus}
U_N^{-1}\text{Op}_h(a)U_N=\text{Op}_h(a\circ M)\quad \text{ for all }a\in S(1).
\end{equation}
In particular, we have the following intertwining relation with quantum translations:
\begin{equation}\label{eq:intertwining-relation}
U_N^{-1}\rho_N(x)U_N=\rho_N(M^{-1}x)\quad \text{ for all }x\in \mathbb{R}^{2n}.
\end{equation}
One consequence of this property is that
\begin{equation*}
U_N\left(\mathcal{H}_{N, \theta}\right)\subseteq \mathcal{H}_{N, M\theta+\frac{N}{2}\varphi_M}.
\end{equation*}
If we choose $\theta\in \mathbb{T}^{2n}$ such that
\begin{equation}\label{eq:quant-condition}
(I-M)\theta=\frac{N}{2}\varphi_M \mod \mathbb{Z}^{2n},
\end{equation}
we hence obtain a unitary operator
\begin{equation*}
U_{N, \theta}\coloneq \left.U_N\right|_{\mathcal{H}_{N, \theta}}:\mathcal{H}_{N, \theta}\to \mathcal{H}_{N, \theta}.
\end{equation*}

When $M$ is hyperbolic, a parameter $\theta\in \mathbb{T}^{2n}$ satisfying the quantization condition \eqref{eq:quant-condition} is guaranteed to exist, and we call $U_{N, \theta}$ a \emph{quantum cat map}. It then becomes clear from property \eqref{eq:egorov-plus} that we also have an exact Egorov property on the torus:
\begin{equation*}
U_{N, \theta}^{-1}\text{Op}_{N, \theta}(a)U_{N, \theta}=\text{Op}_{N, \theta}(a\circ M)\quad \text{ for all }a\in \mathcal{C}^\infty(\mathbb{T}^{2n}).
\end{equation*}

What happens if $M\in \text{Sp}(2n, \mathbb{Z})$ is not in the image of the Lie algebra $\mathfrak{sp}(2n, \mathbb{R})$ under the exponential map? We point out that all we really needed in order to define the quantum cat maps was property \eqref{eq:egorov-plus}. 

For each $M\in \text{Sp}(2n,\mathbb{R})$, denote by $\mathcal{M}_{N, M}$ the set of all unitary transformations $M_N:L^2(\mathbb{R}^n)\to L^2(\mathbb{R}^n)$ satisfying property \eqref{eq:egorov-plus}. By \cite[Theorem 11.9]{zworski12}, these transformations exist, are unique up to a complex phase, and preserve the spaces $\mathcal{S}(\mathbb{R}^n)$ and $\mathcal{S}'(\mathbb{R}^n)$. The set
\begin{equation*}
\mathcal{M}_N\coloneq \bigcup_{M\in \text{Sp}(2n,\mathbb{R})}\mathcal{M}_{N,M}
\end{equation*}
is a subgroup of the unitary transformations of $L^2(\mathbb{R}^n)$, called the \emph{metaplectic group}. The map $M_N\mapsto M$ is a group homomorphism $\mathcal{M}_N\to \text{Sp}(2n,\mathbb{R})$. See \cite[Chapter 4]{folland89} for details.

In the general case, we thus define our quantization procedure by using the same procedure as before but by choosing a metaplectic operator $M_N$. Since we are not concerned with the complex phase (we care about spectral properties), the specific choice does not matter. By uniqueness up to a phase, we know that the definition of the quantum cat maps using Weyl quantization agrees with the more general one.




\subsection{Factorization of the prequantum transfer operator}

We tackle the study of the prequantum transfer operator $F_N: \widetilde{\mathcal{H}}_N\to \widetilde{\mathcal{H}}_N$ associated to a classical cat map $M\in \text{Sp}(2n, \mathbb{Z})$. More precisely, seeking to better understand the dynamics generated by $F_N$, we examine the unitarily equivalent operator $W_{N,\theta}^{-1}F_NW_{N,\theta}$.
\begin{equation}\label{eq:comm-diagram0}
\begin{tikzcd}
\mathcal{H}_{N, \theta}\otimes L^2(\mathbb{R}^n)\arrow[r, dashed]\arrow{d}[swap]{W_{N, \theta}} &  \mathcal{H}_{N, \theta}\otimes L^2(\mathbb{R}^n)\\
\widetilde{\mathcal{H}}_{N}\arrow{r}[swap]{F_N} &  \widetilde{\mathcal{H}}_{N}\arrow{u}[swap]{W_{N,\theta}^{-1}}
\end{tikzcd}
\end{equation}
The explicit formulas quickly get involved and are not very elucidating.  We instead take a small turnabout. We introduce the operator $P_{N,\theta}:\mathcal{S}(\mathbb{R}^n)\to \mathcal{H}_{N, \theta}$ given by
\begin{equation}\label{eq:projection-definition}
P_{N,\theta}\coloneq\sum_{w\in \mathbb{Z}^{2n}}e^{-2\pi i\sigma(\theta, w)}\rho_N(w, e^{\pi i Q(w)}).
\end{equation}
It is well defined because the series converges in $\mathcal{S}'(\mathbb{R}^n)$ and
\begin{equation*}
\rho_N(w, e^{\pi i Q(w)})P_{N,\theta}=e^{2\pi i \sigma (\theta, w)} P_{N,\theta}\quad \text{ for all }w\in \mathbb{Z}^{2n},
\end{equation*}
so we indeed land in the quantum Hilbert space $\mathcal{H}_{N,\theta}$.

Through an explicit computation using equations \eqref{eq:schrodinger-representation} and \eqref{eq:basis} as well as the Poisson summation formula, one can check that the operator $P_{N, \theta}$ defined by equation \eqref{eq:projection-definition} can also be written as
\begin{equation}\label{eq:projection-definition-alternative}
P_{N, \theta} f = \sum_{j\in \mathbb{Z}^n_N}\langle e^{(\theta_1, -\theta_2)}_j, f\rangle_{\mathcal{D}'(\mathbb{R}^n)}e^{\theta}_j\quad \text{ for all } f\in \mathcal{S}(\mathbb{R}^n).
\end{equation}


\begin{lemma}\label{lemma-projection-surjectivity}
The operator $P_{N,\theta}:\mathcal{S}(\mathbb{R}^n)\to \mathcal{H}_{N, \theta}$ is surjective.
\end{lemma}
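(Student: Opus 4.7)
The plan is to use the alternative expression \eqref{eq:projection-definition-alternative} for $P_{N,\theta}$ rather than the series \eqref{eq:projection-definition}. That formula writes $P_{N,\theta}f$ as a linear combination of the basis vectors $\{e^\theta_j\}_{j\in \Z_N^n}$ of $\mathcal{H}_{N,\theta}$, with coefficients $\langle e^{(\theta_1,-\theta_2)}_j,f\rangle_{\mathcal{D}'(\R^n)}$. Since $\mathcal{H}_{N,\theta}$ is finite-dimensional and $\{e^\theta_l\}_{l\in \Z_N^n}$ is a basis, surjectivity reduces to producing, for each $l\in \Z_N^n$, a Schwartz function $f_l$ with $P_{N,\theta}f_l = e^\theta_l$.

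I would build $f_l$ explicitly by concentrating its support near a single point of the lattice on which the distributions $e^{(\theta_1,-\theta_2)}_j$ live. By \eqref{eq:basis}, every such distribution is a weighted sum of Dirac masses supported on the shifted lattice $\Lambda := \tfrac{1}{N}\Z^n - \tfrac{\theta_1}{N}$, whose points are separated by at least $1/N$. Picking a bump function $\chi \in C_c^\infty(\R^n)$ with $\chi(0)=1$ and $\supp \chi \subset \bigl(-\tfrac{1}{2N},\tfrac{1}{2N}\bigr)^n$, I would set $f_l(y):=N^{n/2}\chi\bigl(y-\tfrac{l-\theta_1}{N}\bigr)$. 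This function belongs to $\mathcal{S}(\R^n)$ and, by construction, it vanishes on $\Lambda$ at every point except $\tfrac{l-\theta_1}{N}$, where its value is $N^{n/2}$.

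Plugging $f_l$ into \eqref{eq:projection-definition-alternative} and evaluating each distribution $e^{(\theta_1,-\theta_2)}_j$ on it then yields, after the $N^{-n/2}$ prefactor from \eqref{eq:basis} cancels against the $N^{n/2}$ prefactor of $f_l$, the single contribution $\langle e^{(\theta_1,-\theta_2)}_j,f_l\rangle_{\mathcal{D}'(\R^n)}=\delta_{j,l}$. Consequently $P_{N,\theta}f_l = e^\theta_l$, and as $l$ ranges over $\Z_N^n$ the entire basis lies in the image.

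There is essentially no obstacle here: the argument is a controlled interpolation with a bump function, and the separation of $\Lambda$ by $1/N$ makes the needed vanishings immediate (the bound $|k_i+(j_i-l_i)/N|<1/(2N)$ for $k\in \Z^n$, $j,l\in \Z_N^n$ forces $k=0$ and $j=l$). A more abstract route would be to observe that $P_{N,\theta}$ intertwines the Schrödinger representation with the quasi-periodic action defining $\mathcal{H}_{N,\theta}$ and then invoke irreducibility, but the direct bump construction is shorter and has the advantage of being quantitative.
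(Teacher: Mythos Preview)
Your argument is correct. The key computation is that $k+(j-l)/N$ is a rational with denominator $N$, so lying in the open cube $(-\tfrac{1}{2N},\tfrac{1}{2N})^n$ forces $kN+j-l=0$ componentwise; with $j,l\in\{0,\dots,N-1\}^n$ this indeed gives $k=0$ and $j=l$, and the phase factor $e^{2\pi i\langle\theta_2,0\rangle}=1$ disappears as you noted.

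This is a genuinely different route from the paper's. The paper first reduces to $\theta=0$ via the conjugation $T_{N,\theta}^{-1}P_{N,0}T_{N,\theta}=P_{N,\theta}$, and then builds a \emph{global} right inverse $R:\mathcal{H}_{N,0}\to\mathcal{S}(\R^n)$ as an integral over $\mathbb{T}^n$ in the $\theta_2$ variable, checking $P_{N,0}R=\mathrm{Id}$ through a Fourier-series computation. Your approach bypasses both the reduction and the integral construction by exploiting \eqref{eq:projection-definition-alternative} directly and hitting the basis one element at a time with compactly supported bumps. It is shorter and more elementary; the paper's construction, on the other hand, produces a canonical section (independent of any cutoff choice) and makes the link with the direct-integral decomposition $L^2(\R^n)\simeq\int^\oplus\mathcal{H}_{N,\theta}\,d\theta$ more transparent, which can be useful if one later needs smooth dependence on $\theta$.
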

\begin{proof} 

The commutator formula \eqref{eq:translation-commutator} combined with equations \eqref{eq:hilbert-translation} and \eqref{eq:projection-definition}  yield
\begin{equation*}
T_{N, \theta}^{-1}P_{N, 0}T_{N, \theta}=P_{N, \theta}.
\end{equation*}
It thus suffices to consider the case $\theta=0$. We argue similarly to \cite[Lemma 2.6]{dyatlov24}. Let $\nu \in \mathcal{H}_{N, 0}$ and define
\begin{equation*}
R\nu(y)\coloneq N^{n/2}\int_{\mathbb{T}^n}\left\langle T_{N, (Ny, -\theta_2)}\nu, e^{(-Ny, \theta_2)}_0\right\rangle_{\mathcal{H}_{N, (-Ny, \theta_2)}}\, d\theta_2, \quad  y\in \mathbb{R}^n.
\end{equation*}
One can check that $R\nu \in \mathcal{S}(\mathbb{R}^n)$ using a non-stationary phase argument and the following consequence of the identities \eqref{eq:orthonormal-basis-relationships}:
\begin{equation*}
R\nu (y-k)=R(e^{2\pi i \langle \theta_2, k\rangle}\nu)(y)\quad \text{ for all } k\in \mathbb{Z}^n.
\end{equation*}
We then compute
\begin{equation*}
\begin{split}
\langle e^{0}_j, R\nu \rangle_{\mathcal{D}'(\mathbb{R}^n)}&=N^{-n/2}\sum_{k\in \mathbb{Z}^n} R\nu\left(k+\frac{j}{N}\right)\\
&=\sum_{k\in \mathbb{Z}^n}\int_{\mathbb{T}^n}\langle T_{N, (0, -\theta_2)}\nu, e^{(-Nk-j, \theta_2)}_0\rangle_{\mathcal{H}_{N, (0,\theta_2)}}\, d\theta_2\\
&=\sum_{k\in \mathbb{Z}^n}\int_{\mathbb{T}^n}e^{-2\pi i \langle \theta_2, k\rangle}\langle T_{N, (0, -\theta_2)}\nu, e^{(0, \theta_2)}_j\rangle_{\mathcal{H}_{N, (0,\theta_2)}}\, d\theta_2\\
&=\langle \nu, e^{0}_j\rangle_{\mathcal{H}_{N,0}},
\end{split}
\end{equation*}
where we have used the identities \eqref{eq:orthonormal-basis-relationships} again as well as the convergence of the Fourier series of the function  $\theta_2 \mapsto  \langle T_{N, (0, -\theta_2)}\nu, e^{(0, \theta_2)}_j\rangle_{\mathcal{H}_{(0, \theta_2)}}$.
By equation \eqref{eq:projection-definition-alternative}, this yields $P_{N,0}R\nu =\nu$, as desired.
\end{proof}

The following property will also be useful.
\begin{lemma}\label{lemma:projection-commuting}

Let $M\in \textup{Sp}(2n, \mathbb{Z})$ be hyperbolic, fix $N\in \mathbb{N}^*$ and choose $\theta\in \mathbb{T}^{2n}$ satisfying equation \eqref{eq:quant-condition}. Then, the following diagram commutes.
\begin{center}
\begin{tikzcd}
\mathcal{S}(\mathbb{R}^n)\arrow{r}{M_N}\arrow{d}[swap]{P_{N, \theta}} & \mathcal{S}(\mathbb{R}^n)\arrow{d}{P_{N, \theta}}\\
\mathcal{H}_{N,\theta}\arrow{r}[swap]{U_{N, \theta}} &  \mathcal{H}_{N,\theta}
\end{tikzcd}
\end{center}

\end{lemma}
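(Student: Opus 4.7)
The plan is to prove the stronger operator identity $M_N P_{N,\theta} = P_{N,\theta} M_N$ as maps $\mathcal{S}(\R^n) \to \mathcal{H}_{N,\theta}$, and then note that the left-hand side restricted to act on $\mathcal{H}_{N,\theta}$ agrees with $M_{N,\theta} P_{N,\theta}$ by definition of $M_{N,\theta}$ (via Lemma \ref{lemma-projection-surjectivity}, $P_{N,\theta}$ lands in $\mathcal{H}_{N,\theta}$, where $M_N$ acts as $M_{N,\theta}$).

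To establish this, first I would rewrite the projection using $\rho_N(w, e^{\pi i Q(w)}) = e^{\pi i N Q(w)}\rho_N(w)$, so that
\begin{equation*}
P_{N,\theta} = \sum_{w\in \Z^{2n}} e^{-2\pi i \sigma(\theta, w)}\, e^{\pi i N Q(w)}\, \rho_N(w).
\end{equation*}
Then I would push $M_N$ through the sum using the intertwining relation \eqref{eq:intertwining-relation}, which reads $M_N \rho_N(w) = \rho_N(Mw)\, M_N$. This yields
\begin{equation*}
M_N P_{N,\theta} = \left(\sum_{w\in \Z^{2n}} e^{-2\pi i \sigma(\theta, w)}\, e^{\pi i N Q(w)}\, \rho_N(Mw)\right) M_N.
\end{equation*}

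The next step is to reindex by $w' = Mw$. Since $M \in \textup{Sp}(2n,\Z)$ has determinant $1$ and integer inverse, the map $w \mapsto Mw$ is a bijection of $\Z^{2n}$, so the substitution is legitimate. After relabelling $w' \mapsto w$, two simplifications occur: by symplecticity, $\sigma(\theta, M^{-1}w) = \sigma(M\theta, w)$; and by Lemma \ref{lemma:dyat}, $Q(M^{-1}w) \equiv Q(w) + \sigma(\varphi_M, w) \pmod{2\Z}$, so that $e^{\pi i N Q(M^{-1}w)} = e^{\pi i N Q(w)}\, e^{\pi i N \sigma(\varphi_M, w)}$. Combining the two exponential phases in front of $\rho_N(w)$ and recognizing the result, we obtain
\begin{equation*}
M_N P_{N,\theta} = P_{N,\, M\theta - \frac{N}{2}\varphi_M}\, M_N.
\end{equation*}

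The final step is to match the parameters modulo $\Z^{2n}$. The quantization condition \eqref{eq:quant-condition} gives $M\theta \equiv \theta - \frac{N}{2}\varphi_M \pmod{\Z^{2n}}$, so $M\theta - \frac{N}{2}\varphi_M \equiv \theta - N\varphi_M \equiv \theta \pmod{\Z^{2n}}$, where the last congruence uses $N\varphi_M \in \Z^{2n}$. A short check (shifting $\theta$ by an element of $\Z^{2n}$ changes the summand by $e^{-2\pi i \sigma(w_0, w)} = 1$) shows that $P_{N,\theta'}$ depends only on $\theta' \in \mathbb{T}^{2n}$, so $P_{N,\, M\theta - \frac{N}{2}\varphi_M} = P_{N,\theta}$, completing the argument. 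The only subtlety worth watching carefully is the bookkeeping between $N \varphi_M$ and the factor $\tfrac{1}{2}$, which is precisely what the quantization condition \eqref{eq:quant-condition} is designed to absorb; no harder input than Lemma \ref{lemma:dyat}, symplecticity of $M$, and the intertwining property \eqref{eq:intertwining-relation} is needed.
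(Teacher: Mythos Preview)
Your proof is correct and follows essentially the same route as the paper: both arguments combine the intertwining relation \eqref{eq:intertwining-relation}, the bijection $w\mapsto Mw$ of $\Z^{2n}$, Lemma~\ref{lemma:dyat}, and the quantization condition \eqref{eq:quant-condition}. The only cosmetic difference is the order of operations---the paper first rewrites $P_{N,\theta}$ in the form $\sum_w e^{-2\pi i\sigma(\theta,Mw)}\rho_N(w,e^{\pi i Q(Mw)})$ and then applies intertwining, whereas you intertwine first and reindex afterwards---and your sign $M\theta-\tfrac{N}{2}\varphi_M$ versus the paper's $M\theta+\tfrac{N}{2}\varphi_M$ is immaterial since $N\varphi_M\in\Z^{2n}$.
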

\begin{proof}

As noted before, there always exists $\theta\in \mathbb{T}^{2n}$ satisfying condition \eqref{eq:quant-condition} because the matrix $M$ is hyperbolic by assumption. Using properties \eqref{eq:dyat} and \eqref{eq:quant-condition}, we obtain
\begin{equation*}
\begin{split}
P_{N, \theta}&=\sum_{w\in \mathbb{Z}^{2n}} e^{-2\pi i \sigma(M\theta, Mw)}\rho_N(w, e^{\pi i (Q(Mw)+\sigma(\varphi_M, Mw))})\\
&=\sum_{w\in \mathbb{Z}^{2n}} e^{-2\pi i (\sigma(\theta, Mw)-\frac{1}{2}\sigma(\varphi_M, Mw))}\rho_N(w, e^{\pi i (Q(Mw)+\sigma(\varphi_M, Mw))})\\
&=\sum_{w\in \mathbb{Z}^{2n}} e^{-2\pi i \sigma(\theta, Mw)}\rho_N(w, e^{\pi i Q(Mw)}).
\end{split}
\end{equation*}
Therefore, using the intertwining relation \eqref{eq:intertwining-relation} yields
\begin{equation*}
\begin{split}
U_{N,\theta} P_{N, \theta}&=\sum_{w\in \mathbb{Z}^{2n}} e^{-2\pi i \sigma(\theta, Mw)} M_N\rho_N(w, e^{\pi i Q(Mw)})\\
&=\sum_{w\in \mathbb{Z}^{2n}} e^{-2\pi i \sigma(\theta, Mw)} \rho_N(Mw, e^{\pi i Q(Mw)})M_N\\
&=P_{N,\theta}M_N.\qedhere
\end{split}
\end{equation*}
\end{proof}

Armed with this, we then want to find an operator on $\mathcal{S}(\mathbb{R}^n)\otimes L^2(\mathbb{R}^n)$ making the following diagram commute.
\begin{equation}\label{eq:comm-diagram}
\begin{tikzcd}
\mathcal{S}(\mathbb{R}^n)\otimes L^2(\mathbb{R}^n)\arrow{d}[swap]{P_{N,\theta}\otimes I}\arrow[dashed]{r} & \mathcal{S}(\mathbb{R}^n)\otimes L^2(\mathbb{R}^n)\arrow{d}{P_{N,\theta}\otimes I}\\
\mathcal{H}_{N, \theta}\otimes L^2(\mathbb{R}^n)\arrow{d}[swap]{W_{N,\theta}} &  \mathcal{H}_{N, \theta}\otimes L^2(\mathbb{R}^n)\arrow{d}{W_{N,\theta}}\\
\widetilde{\mathcal{H}}_{N}\arrow{r}[swap]{F_N} &  \widetilde{\mathcal{H}}_{N}
\end{tikzcd}
\end{equation}
We do this when $\varphi_M=0$. The trick lies in using metaplectic operators.

\begin{proposition}\label{proposition:bar-decomp}

If $\varphi_M=0$, then the operator $M_N\otimes M_{N}$ makes the diagram \eqref{eq:comm-diagram} with $\theta = 0$ commute.




\end{proposition}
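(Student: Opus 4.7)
The plan is to verify commutativity by reducing to a direct calculation on elementary tensors, combining Lemma \ref{lemma:projection-commuting} with the metaplectic intertwining relation \eqref{eq:intertwining-relation}. First I would invoke Lemma \ref{lemma:projection-commuting} so that the top square of \eqref{eq:comm-diagram} commutes with middle horizontal arrow $M_{N,\theta}\otimes M_N$; it then suffices to check $U_{N,\theta}\circ (M_{N,\theta}\otimes M_N) = \widehat{M}_N\circ U_{N,\theta}$ on $\mathcal{H}_{N,\theta}\otimes \mathcal{S}(\R^n)$.

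Second, on a pure tensor $\nu\otimes f$, I would use the simpler prequantum lift $\widetilde{M}(x, e^{2\pi is}) = (Mx, e^{2\pi is})$ provided by $\varphi_M = 0$ (Proposition \ref{prop:explicit-construction-map}) and the intertwining relation \eqref{eq:intertwining-relation} rewritten as $\rho_N(M^{-1}x, e^{2\pi is}) = M_N^{-1}\rho_N(x, e^{2\pi is})M_N$. A duality move that transfers the $L^2$-unitary $M_N^{-1}$ from the second slot of the $\mathcal{D}'$-pairing to the first slot as $M_N$ then yields
\begin{equation*}
\widehat{M}_N U_{N,\theta}(\nu\otimes f)(x, e^{2\pi is}) = N^{\tfrac{n}{2}}\langle M_N T_{N,\theta}\nu, \rho_N(x, e^{2\pi is})M_N f\rangle_{\mathcal{D}'(\R^n)}.
\end{equation*}
Matching this with $U_{N,\theta}(M_{N,\theta}\nu\otimes M_N f)(x, e^{2\pi is}) = N^{n/2}\langle T_{N,\theta}M_N\nu, \rho_N(x, e^{2\pi is})M_N f\rangle_{\mathcal{D}'(\R^n)}$ reduces the whole statement to the identity $M_N T_{N,\theta}\nu = T_{N,\theta}M_N\nu$ on $\mathcal{H}_{N,\theta}$.

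The main obstacle is this last identity. I would establish it by writing $T_{N,\theta} = \rho_N(\{\theta\}/N)$, using \eqref{eq:intertwining-relation} to push $M_N$ past $\rho_N(\{\theta\}/N)$, and exploiting the quasi-periodicity of $\nu \in \mathcal{H}_{N,\theta}$ under $\rho_N(w, e^{\pi iQ(w)})$ for $w\in \Z^{2n}$, together with the commutator formula \eqref{eq:translation-commutator}. The hypothesis $\varphi_M = 0$ is essential throughout: besides simplifying the prequantum lift, it ensures via \eqref{eq:quant-condition} that the shift $M^{-1}\{\theta\} - \{\theta\}$ lies in $\Z^{2n}$, which is exactly the condition that lets the quasi-periodicity of $\nu$ close the phase bookkeeping. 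A secondary technical point is to fix the convention of the $\mathcal{D}'$-pairing so that the duality move is legitimate, together with the compatible extension of $M_N$ from $L^2(\R^n)$ to $\mathcal{S}'(\R^n)$.
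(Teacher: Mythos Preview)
Your overall strategy mirrors the paper's: use the simplified lift $\widetilde{M}(x,e^{2\pi is})=(Mx,e^{2\pi is})$ from $\varphi_M=0$, rewrite $\rho_N(M^{-1}x,e^{2\pi is})=M_N^{-1}\rho_N(x,e^{2\pi is})M_N$ via \eqref{eq:intertwining-relation}, and then move $M_N^{-1}$ across the pairing. The gap lies precisely in what you label a ``secondary technical point.''

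The pairing $\langle\cdot,\cdot\rangle_{\mathcal{D}'(\R^n)}$ used in the definition of $U_{N,\theta}$ is the \emph{bilinear} distribution pairing (as is clear from \eqref{eq:unraveled-definition}, where $\langle e^0_j,g\rangle_{\mathcal{D}'}$ simply evaluates $g$ at the support of the deltas). Consequently, transferring the unitary $M_N^{-1}$ from the second slot to the first does \emph{not} yield $M_N$; it yields the transpose $(M_N^{-1})^T=KM_NK$, where $K$ is complex conjugation. So your displayed identity should read
\[
\widehat{M}_NU_{N,\theta}(\nu\otimes f)(x,e^{2\pi is})=N^{\frac{n}{2}}\bigl\langle KM_NK\,T_{N,\theta}\nu,\ \rho_N(x,e^{2\pi is})M_Nf\bigr\rangle_{\mathcal{D}'(\R^n)},
\]
and the reduction becomes $KM_NK\,T_{N,\theta}\nu=T_{N,\theta}M_{N,\theta}\nu$, not the commutation relation you wrote. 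This is not a matter of convention that can be fixed by redefining the pairing: the bilinear convention is forced by the proof of Proposition~\ref{proposition:unitary-identification}, which relies on \eqref{eq:unraveled-definition}.

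The paper closes exactly this gap (working at $\theta=0$, so $T_{N,0}=\id$) by establishing the identity $KM_{N,0}K=M_{N,0}$ on $\mathcal{H}_{N,0}$, arguing via an eigenbasis of the finite-dimensional unitary $M_{N,0}$. Your proposed phase bookkeeping with quasi-periodicity and \eqref{eq:translation-commutator} does not produce this identity; some argument controlling how $M_N$ (or $M_{N,0}$) interacts with complex conjugation is genuinely required.
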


\begin{proof}
Since $U_{N, 0}$ is a unitary transformation of the finite-dimensional space $\mathcal{H}_{N, 0}$, we can use an orthonormal basis of eigenvectors to check that
\begin{equation}\label{eq:complex-commute}
KU_{N, 0}K=U_{N, 0},
\end{equation}
where $K$ is the complex conjugation operator. Indeed, the complex conjugate of an eigenvector to a given eigenvalue is an eigenvector for the conjugate eigenvalue.


We recall that, from Proposition \ref{prop:explicit-construction-map}, we have
\begin{equation*}
(F_N u)(x, e^{2\pi i s})=u\left(M^{-1}x, e^{2\pi i s}\right).
\end{equation*}
Therefore, for all $f,g\in \mathcal{S}(\mathbb{R}^n)$, we can apply the intertwining relation \eqref{eq:intertwining-relation}, the fact that $M_N$ is unitary on $L^2(\mathbb{R}^n)$, property \eqref{eq:complex-commute} and Lemma \ref{lemma:projection-commuting} to write
\begin{equation*}
\begin{split}
F_{N}(W_{N,0}(P_{N,0}f\otimes g))(x,e^{2\pi i s})&=N^{n/2}\left\langle P_{N, 0}f,\rho_N\left(M^{-1} x,  e^{2\pi i  s}\right)g\right\rangle_{\mathcal{D}'(\mathbb{R}^{n})} \\
&=N^{n/2}\left\langle P_{N, 0}f,M_{N}^{-1}\rho_N\left( x, e^{2\pi i s}\right)M_N g\right\rangle_{\mathcal{D}'(\mathbb{R}^{n})}\\
&=N^{n/2}\left\langle KM_{N}K P_{N, 0}f,\rho_N\left( x, e^{2\pi i s}\right)M_N g\right\rangle_{\mathcal{D}'(\mathbb{R}^n)}\\
&=N^{n/2}\left\langle P_{N, 0}M_N f,\rho_N\left( x, e^{2\pi i s}\right)M_N g\right\rangle_{\mathcal{D}'(\mathbb{R}^n)}.
\end{split}
\end{equation*}
This shows that
\begin{align*}
F_{N}(W_{N,0}(P_{N,0}f\otimes g))
&=W_{N,0}(P_{N,0}M_Nf\otimes M_N g).\qedhere
\end{align*}
\end{proof}



Applying Lemmas \ref{lemma-projection-surjectivity} and \ref{lemma:projection-commuting}, we thus obtain the following corollary.

\begin{corollary}\label{cor:almost-there}
If $\varphi_M=0$, then
\begin{equation*}
W_{N,0}^{-1}F_NW_{N, 0}=U_{N, 0}\otimes M_{N}
\end{equation*}
on $\mathcal{H}_{N, 0}\otimes L^2(\mathbb{R}^n)$. This completes the commutative diagram \eqref{eq:comm-diagram0} with $\theta = 0$.
\end{corollary}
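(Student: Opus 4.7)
The plan is to assemble the three results just proved, namely Proposition \ref{proposition:bar-decomp}, Lemma \ref{lemma:projection-commuting}, and Lemma \ref{lemma-projection-surjectivity}, by descending the identity produced in Proposition \ref{proposition:bar-decomp} from $\mathcal{S}(\R^n)\otimes L^2(\R^n)$ down to $\mathcal{H}_{N,0}\otimes L^2(\R^n)$. Since $\varphi_M = 0$, the quantization condition \eqref{eq:quant-condition} is trivially satisfied by $\theta = 0$, so Lemma \ref{lemma:projection-commuting} applies and tensoring with the identity on $L^2(\R^n)$ yields
\begin{equation*}
(P_{N, 0}\otimes I)\circ (M_N\otimes M_N) = (M_{N, 0}\otimes M_N)\circ (P_{N, 0}\otimes I)
\end{equation*}
as operators on $\mathcal{S}(\R^n)\otimes L^2(\R^n)$.

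Next, I would feed this into the statement of Proposition \ref{proposition:bar-decomp}, which amounts to the identity
\begin{equation*}
\widehat{M}_N\circ U_{N, 0}\circ (P_{N, 0}\otimes I) = U_{N, 0}\circ (P_{N, 0}\otimes I)\circ (M_N\otimes M_N).
\end{equation*}
Substituting the commutation from the previous paragraph into the right-hand side gives
\begin{equation*}
\widehat{M}_N\circ U_{N, 0}\circ (P_{N, 0}\otimes I) = U_{N, 0}\circ (M_{N, 0}\otimes M_N)\circ (P_{N, 0}\otimes I).
\end{equation*}

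To conclude, I would appeal to Lemma \ref{lemma-projection-surjectivity}, which tells us that $P_{N, 0}:\mathcal{S}(\R^n)\to \mathcal{H}_{N, 0}$ is surjective. Because $\mathcal{H}_{N, 0}$ is finite dimensional, the image of $P_{N, 0}\otimes I$ acting on $\mathcal{S}(\R^n)\otimes L^2(\R^n)$ already coincides with the whole Hilbert tensor product $\mathcal{H}_{N, 0}\otimes L^2(\R^n)$. Thus $P_{N, 0}\otimes I$ can be cancelled on the right, leaving $\widehat{M}_N\circ U_{N, 0} = U_{N, 0}\circ (M_{N, 0}\otimes M_N)$ on $\mathcal{H}_{N, 0}\otimes L^2(\R^n)$; composing on the left with $U_{N, 0}^{-1}$, which exists by the unitarity established in Proposition \ref{proposition:unitary-identification}, produces the announced factorization.

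There is essentially no obstacle here, since all the substantive work was carried out in Proposition \ref{proposition:bar-decomp}. The only point that will deserve a brief mention is that the finite dimensionality of $\mathcal{H}_{N, 0}$ turns the algebraic surjectivity of $P_{N, 0}\otimes I$ into surjectivity onto the Hilbert tensor product, so no density argument is needed to extend the identity.
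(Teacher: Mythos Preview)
Your argument is correct and follows exactly the route the paper intends: the paper simply says the corollary follows by applying Lemmas \ref{lemma-projection-surjectivity} and \ref{lemma:projection-commuting} to Proposition \ref{proposition:bar-decomp}, and you have spelled out precisely how those two lemmas let one cancel $P_{N,0}\otimes I$ on the right and then invert $U_{N,0}$.
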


\subsection{Pollicott--Ruelle spectrum of the prequantum transfer operator}

In this section, we prove Theorems \ref{theorem:main-result} and \ref{theorem:second-result}. Using the tensor product decomposition from Corollary \ref{cor:almost-there}, we show that when $\varphi_M=0$, the Pollicott--Ruelle resonances of $F_{N}$ are given by $e^{i\varphi_j} e^{i\lambda}$, where $e^{i\varphi_j}$, $\varphi_j\in \mathbb{R}$, are the eigenvalues of $U_{N, 0}$ on $\mathcal{H}_{N, 0}$ and $e^{i\lambda}$, $\lambda\in \mathbb{C}$, are the Pollicott--Ruelle resonances of the operator $M_{N}$.

The resonances of $M_{N}$ are defined via correlation functions $C_{u,v}$ as in Section \ref{section:statement-of-the-results} but with observables $u$ and $v$ in $ \mathcal{S}(\mathbb{R}^n)$. We have not yet shown that these are well defined. The strategy is to reduce the problem to a study of the transfer operator
\begin{equation*}
L_E u \coloneq u \circ E^{-1}
\end{equation*}
 associated to an expanding linear map $E:\mathbb{R}^n\to \mathbb{R}^n$. As a reminder, we say that an invertible matrix $E\in \text{GL}(n, \mathbb{R})$ is \emph{expanding} if  $\Vert E^{-1}\Vert<1$.
 
 Then, \cite[Proposition 3.4.6]{faure15} shows that $L_E$ has a well-defined discrete Pollicott--Ruelle spectrum contained in annuli \eqref{eq:band-structure-2} indexed by $k\in \mathbb{N}$. Each annulus corresponds to the restriction of $L_E$ to homogeneous polynomials on $\mathbb{R}^n$ of degree $k$.


\begin{proposition}\label{prop:essentially-there}
Let $M\in\textup{Sp}(2n,\mathbb{R})$ be hyperbolic. There exists an expanding matrix $E\in \textup{GL}(n, \mathbb{R})$ with $|\det E|>1$ such that each metaplectic operator $M_N$ on $ L^2(\mathbb{R}^n)$ is unitarily equivalent to the unitary operator $|\det E|^{-1/2}L_E$ on $L^2(\mathbb{R}^n)$. The equivalence preserves $\mathcal{S}(\mathbb{R}^n)$.
\end{proposition}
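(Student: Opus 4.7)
The plan is to split the argument into two stages. First, I would reduce to the block-diagonal normal form of $M$ by a metaplectic conjugation. Using the first part of Theorem~\ref{theorem:main-result}, choose $S \in \textup{Sp}(2n,\R)$ and $E \in \textup{GL}(n,\R)$ with $\Vert E^{-1}\Vert < 1$ and $|\det E| > 1$ such that $M = S M_0 S^{-1}$, where
\begin{equation*}
M_0 := \begin{pmatrix} E & 0 \\ 0 & (E^T)^{-1} \end{pmatrix}.
\end{equation*}
Picking any metaplectic lift $S_N$ of $S$, a direct check of \eqref{eq:egorov-plus} shows that $S_N^{-1} M_N S_N \in \mathcal{M}_{N, M_0}$. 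Since $S_N$ is unitary on $L^2(\R^n)$ and preserves $\mathcal{S}(\R^n)$, the problem reduces to exhibiting a single element of $\mathcal{M}_{N, M_0}$ equal to $|\det E|^{-\frac{1}{2}} L_E$.

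The core step is a direct verification that $T := |\det E|^{-\frac{1}{2}} L_E$ is a metaplectic lift of $M_0$. Unitarity of $T$ on $L^2(\R^n)$ is a single change of variables $y = Ex$, and preservation of $\mathcal{S}(\R^n)$ is obvious. For the exact Egorov property, I would insert $T f$ into the Weyl integral representation, substitute $y = E y'$ spatially and $\xi = (E^T)^{-1}\eta$ in the Fourier variable, and observe that the Jacobians $|\det E|^{\pm 1}$ exactly cancel the normalizing factors $|\det E|^{\pm \frac{1}{2}}$ coming from $T$ and $T^{-1}$, while the phase $\langle \xi, z - y\rangle$ becomes $\langle \eta, x - y'\rangle$ (at $z = Ex$) and the symbol becomes $a\bigl(E\tfrac{x+y'}{2},\, (E^T)^{-1}\eta\bigr) = (a\circ M_0)\bigl(\tfrac{x+y'}{2},\, \eta\bigr)$. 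This yields $T^{-1}\textup{Op}_h(a)\, T = \textup{Op}_h(a\circ M_0)$ for every $a \in S(1)$, so $T \in \mathcal{M}_{N, M_0}$.

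By the uniqueness-up-to-a-phase of metaplectic lifts (\cite[Theorem 11.9]{zworski}), we then have $S_N^{-1} M_N S_N = e^{i\alpha} T$ for some $\alpha \in \R$. Since the overall phase is irrelevant for the spectral analysis (as emphasized at the end of Section~\ref{subsection:quantum-cat-maps}), this gives the required unitary equivalence between $M_N$ and $|\det E|^{-\frac{1}{2}} L_E$ via conjugation by $S_N$, which preserves $\mathcal{S}(\R^n)$. The main obstacle is not the calculation itself, which is forced by the definitions, but rather the bookkeeping of the phase and the careful use of the symplectic normal form imported from Theorem~\ref{theorem:main-result}; once those are in place, the identification of the metaplectic lift of a block-diagonal matrix with a rescaled pullback is essentially inevitable.
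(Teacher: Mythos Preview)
Your proof is correct and follows essentially the same approach as the paper: conjugate $M$ symplectically to the block-diagonal form $M_0$, lift the conjugation to the metaplectic group, and then verify by a change of variables in the Weyl integral that $|\det E|^{-1/2}L_E$ satisfies the exact Egorov property for $M_0$.

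One organizational point: citing Theorem~\ref{theorem:main-result} for the normal form is circular in the paper's logic, since Theorem~\ref{theorem:main-result} is proved \emph{using} this proposition. The paper instead establishes the normal form here directly, by observing that the stable and unstable subspaces $E^s,E^u$ of the hyperbolic $M$ are Lagrangian (via $\omega(x,y)=\omega(M^kx,M^ky)\to 0$) and then invoking the transitivity of $\textup{Sp}(2n,\R)$ on pairs of transverse Lagrangians. You should either reproduce that short argument or make clear you are treating the normal form as an independent linear-algebra fact rather than as a consequence of Theorem~\ref{theorem:main-result}.
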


\begin{proof}
The first step of our argument is to show that there exist $D\in \textup{Sp}(2n, \mathbb{R})$ and an expanding $E\in \textup{GL}(n, \mathbb{R})$ such that
\begin{equation}\label{eq:normal-form}
D^{-1}MD=\left(\begin{matrix}
E & 0\\
0 & (E^T)^{-1}
\end{matrix}\right).
\end{equation}
To see why this is true, consider the stable and unstable subspaces $E_s, E_u\subset\mathbb{R}^{2n}$ associated to the hyperbolic matrix $M$. For any $x,y\in E_s$ (respectively $E_u$) and $n\in \mathbb{Z}$, we have $\omega(x, y)=\omega(M^nx,M^ny)$. Taking the limit $n\to \infty$ (respectively $n\to -\infty$) gives $\omega(x,y)=0$. This implies that $E_s$ and $E_u$ are isotropic. Since $\mathbb{R}^{2n}=E_s\oplus E_u$, it follows by a count of dimensions that both subspaces are Lagrangian.

Since $\text{Sp}(2n, \mathbb{R})$ acts transitively on the set of pairs of transverse Lagrangians, there exists $D\in \text{Sp}(2n, \mathbb{R})$ simultaneously mapping $E_s$ to $\mathbb{R}^{n}\oplus \{0\}$ and $E_u$ to $\{0\}\oplus \mathbb{R}^n$. The stable and unstable subspaces are preserved by $M$, which acts as a contraction on $E_s$ and an expanding map on $E_u$. This implies that we must have the  relation \eqref{eq:normal-form} for some expanding $E\in \text{GL}(n, \mathbb{R})$. It is then clear that $|\det E|>1$.

Since the map $M_N\mapsto M$ from the metaplectic group $\mathcal{M}_N$ to $\text{Sp}(2n, \mathbb{R})$ is a group homomorphism, we have $(D^{-1}MD)_N=D_N^{-1}M_ND_N$ up to a complex phase. This tells us in particular that the operator $M_N: L^2(\mathbb{R}^n)\to L^2(\mathbb{R}^n)$ is unitarily equivalent to $(D^{-1}MD)_N:L^2(\mathbb{R}^n)\to L^2(\mathbb{R}^n)$. Note that $D_N$ preserves the subspace $\mathcal{S}(\mathbb{R}^n)$.

We claim that, up to a complex phase, we have
\begin{equation*}
(D^{-1}MD)_N=|\det E|^{-1/2}L_E.
\end{equation*}
By \cite[Theorem 11.9]{zworski12}, this is equivalent to asking that
\begin{equation*}
L_E^{-1}\text{Op}_h(a)L_E =\text{Op}_h(a\circ (D^{-1}MD))\quad \text{ for all } a\in S(1).
\end{equation*}
This can be readily checked by using the definition
\begin{equation*}
[\text{Op}_h(a)u](x)\coloneq (2\pi h)^{-n}\iint e^{\frac{i}{h}\langle \xi, x-y\rangle}a\left(\tfrac{x+y}{2}, \xi\right)u(y)\, d\xi\, dy, \quad u\in \mathcal{S}(\mathbb{R}^n),
\end{equation*}
 the change of variables formula, and an argument by density.
\end{proof}

We can now collect the results of the previous sections.



\begin{proof}[Proof of Theorem \ref{theorem:main-result}]
Suppose $\varphi_M=0$ and fix $N\in \mathbb{N}^*$.
Let $u, v\in \mathcal{C}^\infty_N(\Gamma\setminus \mathbb{H}^{\text{red}}_n)$ and set $\tilde{u}\coloneq W_{N,0}^{-1}u$, $\tilde{v}\coloneq W_{N,0}^{-1}v$. By Corollary \ref{cor:almost-there}, since $W_{N, 0}$ is unitary, we get
\begin{equation}\label{eq:correlation-decomposition}
\begin{split}
C_{u, v}(t)&=\langle F_N^t u, v\rangle_{\widetilde{\mathcal{H}}_N}\\
&=\langle W_{N, 0}(U_{N, 0}\otimes M_{N})^tW_{N,0}^{-1}u, v\rangle_{\widetilde{\mathcal{H}}_N}\\
&=\langle (U_{N, 0}\otimes M_{N})^t\tilde{u}, \tilde{v}\rangle_{\mathcal{H}_{N, 0}\otimes L^2(\mathbb{R}^n)}.
\end{split}
\end{equation}
It follows that
\begin{equation*}
\widehat{C}_{u, v}(\lambda)\coloneq\sum_{t=1}^\infty e^{-i\lambda t}C_{u,v}(t)=\langle (e^{i \lambda}I-U_{N,0}\otimes M_N)^{-1} \tilde{u}, \tilde{v}\rangle_{\mathcal{H}_{N, 0}\otimes L^2(\mathbb{R}^n)}.
\end{equation*}
In light of Corollary \ref{corollary:observables-preserved},  $F_N$ has the same Pollicott--Ruelle resonances as $U_{N,0}\otimes M_N$. We claim that these resonances are the products of the eigenvalues of $U_{N,0}$ and the resonances of $M_N$. By Proposition \ref{prop:essentially-there}, the resonances of $M_N$ are the same as those of $|\det E|^{-1/2}L_E$, so this will give us the desired result.

Let $\{\nu_j \}_{j \in \mathbb{Z}^n_N}$ be an orthonormal eigenbasis of the unitary operator $U_{N, 0}$ on $\mathcal{H}_{N, 0}$. Let us write $U_{N, 0}\nu_j = e^{i\varphi_j}\nu_j$ with $\varphi_j\in [0, 2\pi )$. We have $\tilde{u} = \sum_{j\in \mathbb{Z}^n_N} a_j\nu_j\otimes f_j$ and $\tilde{v}= \sum_{j\in \mathbb{Z}^n_N} b_j \nu_j\otimes g_j$ for some $a_j, b_j\in \mathbb{C}$ and $f_j, g_j\in \mathcal{S}(\mathbb{R}^n)$. Equation \eqref{eq:correlation-decomposition} becomes
\begin{equation*}
C_{u, v}(t)=\sum_{j\in \mathbb{Z}^n_N}a_{j}\overline{b}_j e^{i\varphi_jt}\langle M_{N}^t f_j, g_j\rangle_{L^2(\mathbb{R}^n)},
\end{equation*}
so we obtain
\begin{equation*}
\widehat{C}_{u, v}(\lambda)=\sum_{j\in \mathbb{Z}^n_N}a_{j}\overline{b}_j \langle (e^{-i\varphi_j }e^{i \lambda}I-M_{N})^{-1} f_j, g_j\rangle_{L^2(\mathbb{R}^n)}.
\end{equation*}
This shows that, if $\lambda\in \mathbb{C}$ is a pole of the meromorphic extension of $\widehat{C}_{u,v}$ to $\mathbb{C}$, then $e^{i\lambda}=e^{i\varphi_j}e^{i\lambda'}$ for some eigenvalue $e^{i\varphi_j}$ of $U_{N,0}$ and some resonance $e^{i\lambda'}$ of $M_{N}$.

For the converse direction, we use the characterization of resonances as eigenvalues on anisotropic Sobolev spaces. Indeed, if $e^{i\lambda'}$ is a resonance of $M_N$, then there is an anisotropic Sobolev space on which $e^{i\lambda'}$ is an eigenvalue of $M_N$, but then taking the tensor product of this space with any eigenspace of $\mathcal{H}_{N,0}$ yields an anisotropic Sobolev space for $U_{N, 0}\otimes M_N$, with eigenvalues of the desired form. \end{proof}

\begin{proof}[Proof of Theorem \ref{theorem:second-result}]
By Proposition \ref{prop:essentially-there}, $M_N$ and $|\det E|^{-1/2}L_E$ have the same resonances. Pick $\rho\in (\Vert E^{-1}\Vert, 1)$. Then, the resonances of $|\det E|^{1/2}M_N$ lie inside the disk of radius $\rho$, except for the resonance $1$. Denote by $\Pi_{\rho}$ the spectral projector of $|\det E|^{1/2}M_N$ on $\{z\in \mathbb{C} \mid |z|>\rho\}$. It has finite rank, commutes with $M_N$, and is in fact the delta distribution $\delta_0$. Therefore, for any $f, g\in \mathcal{S}(\mathbb{R}^n)$,  we have
\begin{equation}\label{eq:important-part}
\langle (M_N\Pi_\rho)^tf, g\rangle_{L^2(\mathbb{R}^n)}=|\det E|^{-t/2} f(0)\int_{\mathbb{R}^n} \overline{g(x)}\, dx.
\end{equation}

As explained in \cite[Chapter 3]{faure15}, there exists a family of Hilbert spaces $H^r(\mathbb{R}^n)$ for arbitrarily large $r>0$, called anisotropic Sobolev spaces, satisfying
$$\mathcal{S}(\mathbb{R}^n)\subset H^r(\mathbb{R}^n)\subset \mathcal{S}'(\mathbb{R}^n),$$
such that the operator $M_N$ extends to a bounded operator
$$M_N: H^r(\mathbb{R}^n)\to H^r(\mathbb{R}^n)$$
with an essential spectral radius bounded by $\rho^r|\det E|^{-1/2} $. From the definition of $\Pi_\rho$, the essential spectral radius of $M_N(I-\Pi_\rho)$ is bounded by $\rho|\det E|^{-1/2}$, so
$$\Vert (M_N(I-\Pi_\rho))^t\Vert_{H^r(\mathbb{R}^n)}\leq C |\det E|^{-t/2}\rho^t$$
for some constant $C>0$. It follows that
\begin{equation*}
\left|\langle (M_N(I-\Pi_\rho))^tf, g\rangle_{L^2(\mathbb{R}^n)}\right|\leq C \Vert f \Vert_{H^r(\mathbb{R}^n)} \Vert g \Vert_{(H^r(\mathbb{R}^n))'}|\det E|^{-t/2}\rho^t.
\end{equation*}
Combining this with equation \eqref{eq:important-part}, we deduce the estimate
\begin{equation*}
\begin{split}
\langle M^t_{N} f, g \rangle_{L^2(\mathbb{R}^n)}&=\langle (M_N\Pi_\rho)^tf, g\rangle_{L^2(\mathbb{R}^n)}+ \langle (M_N(I-\Pi_\rho))^tf, g\rangle_{L^2(\mathbb{R}^n)}\\
&= |\det E|^{-t/2}\left(f(0)\int_{\mathbb{R}^n} \overline{g(x)}\, dx + \mathcal{O}(\rho^t)\Vert f \Vert_{H^r(\mathbb{R}^n)}\Vert g\Vert_{(H^r(\mathbb{R}^n))'}\right).
\end{split}
\end{equation*}
For any $u, v\in \mathcal{C}^\infty_N(\Gamma\setminus \mathbb{H}^{\text{red}}_n)$, let us write $\hat{u}\coloneq(I\otimes \delta_0)W^{-1}_{N,0}u$ and $\hat{v}\coloneq(I\otimes\, dx)W^{-1}_{N,0}v$, where $I\otimes dx$ is defined in equation \eqref{eq:projectors}. Then, following equation \eqref{eq:correlation-decomposition}, we obtain
\begin{equation*}
\begin{split}
C_{u, v}(t)&=\langle (U_{N, 0}\otimes M_{N})^tW_{N, 0}^{-1}u, W_{N, 0}^{-1}v\rangle_{\mathcal{H}_{N, 0}\otimes L^2(\mathbb{R}^n)}\\
&=\langle U_{N, 0}^t\hat{u}, \hat{v}\rangle_{\mathcal{H}_{N, 0}} |\det E|^{-t/2}  \left(1 + \mathcal{O}_{u,v}(\rho^t)\right).\qedhere
\end{split}
\end{equation*}
\end{proof}

\subsection{Example} Let us put together by working through an explicit example. For any $A\in \text{GL}(n, \mathbb{Z})$ with $|\det A| = 1$, the matrix
$$M= \left(\begin{matrix}
A & 0\\
0 & (A^T)^{-1}\\
\end{matrix}\right)$$
belongs to $\text{Sp}(2n, \mathbb{Z})$. By Remark \ref{remark:varphi-m}, we have $\varphi_M=0$. Fix $n=3$ and consider
$$A= \left(\begin{matrix}
0 & 1 & 0\\
0 & 0 & 1\\
1 & 1 & 0\\
\end{matrix}\right).$$
This matrix has one real eigenvalue $\mu_+>1$ and a complex conjugate pair $\mu, \overline{\mu}$ lying inside the unit disk. As illustrated in Figure \ref{figure:eigenvalues-m}, the matrix $M$ is  hyperbolic.

\begin{figure}[htp]
\includegraphics[scale=0.4]{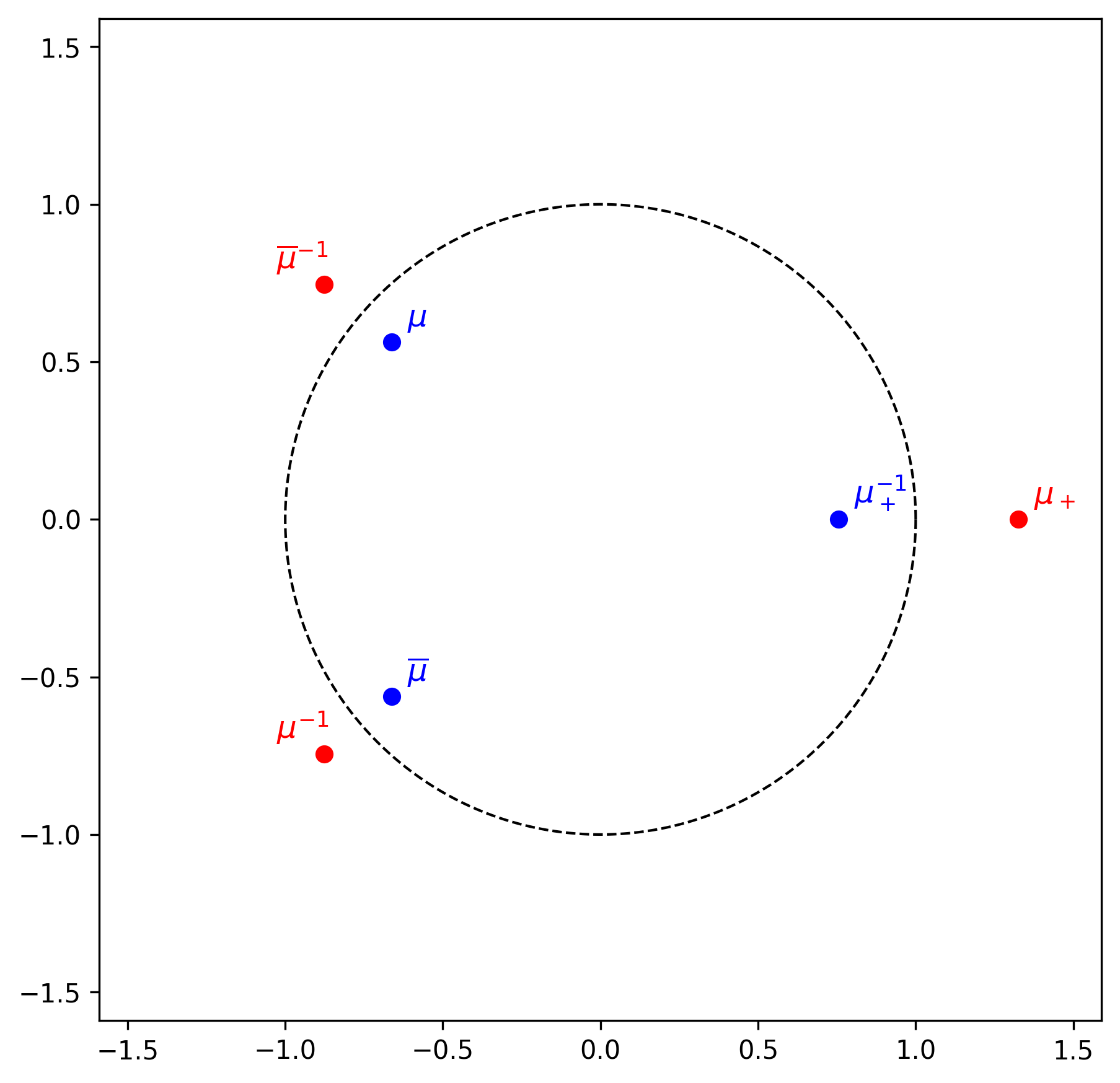}
\caption{Eigenvalues of the hyperbolic matrix $M\in \text{Sp}(6, \mathbb{Z})$.}
\label{figure:eigenvalues-m}
\end{figure}

Note that the eigenvalues $\mu_+$, $\mu^{-1},\overline{\mu}^{-1}$ lay outside of the unit disk. Using their eigenvectors, it is then easy to check that $M$ is symplectically conjugate to
$$\left(\begin{matrix}
E & 0\\
0 & (E^T)^{-1}\\
\end{matrix}\right),$$
where
$$E= \left(\begin{matrix}
\mu_+ & 0 & 0\\
0 & \text{Re}(\mu^{-1}) & \text{Im}(\mu^{-1})\\
0 & -\text{Im}(\mu^{-1}) & \text{Re}(\mu^{-1})\\
\end{matrix}\right).$$
We can then compute $\det E = \mu_+^2>1$, as well as the operator norms
$$\Vert E^{-1} \Vert  = |\mu|< 1 <  \mu_+= \Vert E\Vert.$$
Therefore, $E$ is an expanding map as in the statement of Theorem \ref{theorem:main-result}. The eigenvalues of the transfer operator $L_E$ are
$$\mu_+^{-\alpha_1}\mu^{\alpha_2}\overline{\mu}^{\alpha_3}$$
for all $\alpha_1, \alpha_2, \alpha_3\in \mathbb{N}$ satisfying $\alpha_1+\alpha_2+\alpha_3=k$.

\begin{figure}[htp]
\includegraphics[scale=0.5]{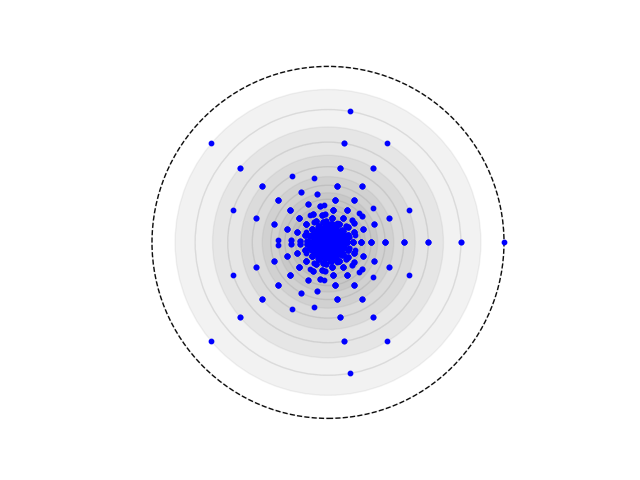}
\caption{Resonances of the operator $L_E$ and the annuli \eqref{eq:band-structure-2}.}
\end{figure}

Using \cite[Proposition 2.11]{dyatlov24}, we see that for any $N\in \mathbb{N}^*$ and $\theta=0$, a quantization $U_{N, 0}$  of $M$ acting on $\mathcal{H}_{N,0}$ satisfies
$$U_{N, 0} e^0_j =e^0_{Aj} \quad \text{ for all } j\in \mathbb{Z}^n_N.$$
Pick $N=2$, for instance. By choosing the right ordering of the basis $\{e^0_j\}_{j\in \mathbb{Z}^3_2}$, the matrix representation of $U_{2,0}:  \mathcal{H}_{2, 0}\to \mathcal{H}_{2, 0}$ is
$$M_{2,0}=\left(\begin{matrix}
1 & 0 & 0 & 0 & 0 & 0 & 0 & 0\\
0 & 0 & 0 & 0 & 0 & 0 & 0 & 1\\
0 & 1 & 0 & 0 & 0 & 0 & 0 & 0\\
0 & 0 & 1 & 0 & 0 & 0 & 0 & 0\\
0 & 0 & 0 & 1 & 0 & 0 & 0 & 0\\
0 & 0 & 0 & 0 & 1 & 0 & 0 & 0\\
0 & 0 & 0 & 0 & 0 & 1 & 0 & 0\\
0 & 0 & 0 & 0 & 0 & 0 & 1 & 0\\
\end{matrix}\right).$$
This is nothing but a permutation matrix with two cycles. Its spectrum is
$$\sigma(U_{2,0})=\{e^{2\pi i k/7} \mid k=0,1, \dotsc, 6\}.$$
See Figure \ref{figure:spectra-b}.
The eigenvalue $1$ has double multiplicity because it has eigenvectors
$$e_0^0\qquad \text{ and }\qquad \sum_{j\in \mathbb{Z}^3_2}e_j^0.$$
By Theorem \ref{theorem:main-result}, we obtain the prequantum resonances illustrated in Figure \ref{figure:spectra-a}.

\bibliographystyle{alpha}
\bibliography{references}

\end{document}